\tikzset{
commutative diagrams/.cd, 
arrow style=tikz, 
diagrams={>=stealth}
}
\newcommand{\angs}[1]{\langle #1 \rangle}
\newcommand{\mb}{\mathbf}
\newcommand{\mc}{\mathcal}
\newcommand{\cop}{\copyright}
\newcommand{\D}{\Delta}
\newcommand{\fiss}{\textit{fiss}}
\newcommand{\inj}{\ \tikz[baseline]\draw[>=stealth, commutative diagrams/hookrightarrow](0,0.5ex)--(0.5,0.5ex);\ }
\newcommand{\iy}{\infty}
\newcommand{\lcm}{\text{lcm}}
\newcommand{\La}{\Lambda}
\newcommand{\Mack}{\categ{Mack}}
\newcommand{\Orb}{\categ{Orb}}
\newcommand{\os}{\overset}
\newcommand{\para}{\acwopencirclearrow}
\newcommand{\NNhat}{\widehat{\NN}}
\newcommand{\st}{\textit{st}}
\newcommand{\Symm}{\textup{Symm}}
\newcommand{\twa}{\widetilde{\mathcal{O}}}
\newcommand{\ul}{\underline}
\newcommand{\X}{\times}
\DeclareMathOperator{\THH}{TH}
\title{Cyclonic spectra, cyclotomic spectra, and a conjecture of {K}aledin}
\author{Clark Barwick}
\address{Massachusetts Institute of Technology, Department of Mathematics, Bldg. 2, 77 Massachusetts Ave., Cambridge, MA 02139-4307}
\email{clarkbar@gmail.com}
\author{Saul Glasman}
\address{School of Mathematics, Institute for Advanced Study, Fuld Hall, 1 Einstein Dr., Princeton NJ 08540}
\email{sglasman@math.ias.edu}
\begin{document}

\begin{abstract}
With an explicit, algebraic indexing $(2,1)$-category, we develop an efficient homotopy theory of \emph{cyclonic objects}: circle-equivariant objects relative to the family of finite subgroups. We construct an $\infty$-category of cyclotomic spectra as the homotopy fixed points of an action of the multiplicative monoid of the natural numbers on the category of cyclonic spectra. Finally, we elucidate and prove a conjecture of Kaledin on cyclotomic complexes.
\end{abstract}

\maketitle

\setcounter{tocdepth}{1}
\tableofcontents

%-------------------------------------------------------------------%

\setcounter{section}{-1}

\section{Summary} We construct the homotopy theory of cyclotomic spectra by means of a completely algebraic approach -- without reference to equivariant orthogonal spectra, equivariant $S$-modules, functors with smash product, etc., and we employ it, in this paper and its sequels, to realize a program suggested by Kaledin \cite{MR2827805,MR2918295,MR3137194}.

The construction proceeds in four steps:
\begin{enumerate}[(1)]
\item We begin with the category of $\QQ/\ZZ$-sets with finitely many orbits, all of which are of the form
\[\angs{m}\coloneq\QQ\Big/\frac{1}{m}\ZZ.\]
\item Then we sprinkle in additional $2$-isomorphisms -- one for every rational number that rotates one $\QQ/\ZZ$-equivariant map into another. In effect, these $2$-isomorphisms introduce a free homotopy between the identity and a generator in each cyclic group
\[\angs{m}_N\coloneq\frac{1}{N}\ZZ\Big/\frac{1}{m}\ZZ.\]
The result is a $2$-category (that is, a category enriched in groupoids), $\FF_{\cop}$, whose objects we call \emph{cyclonic sets}.
\item Next, using the technology introduced by the first author in \cite{mack1}, we define \emph{cyclonic spectra} as \emph{spectral Mackey functors} -- that is, as additive functors from a $2$-category $A^{\eff}(\FF_{\cop})$ of spans of objects of $\FF_{\cop}$ to the $\infty$-category $\Sp$ of spectra. In effect, then, a cyclonic spectrum consists of the following data:
\begin{itemize}
\item for every positive integer $m$, a spectrum $X\angs{m}$,
\item for every positive integer $m$, an action $\gamma_m$ of $\QQ/\frac{1}{m}\ZZ$ on $X\angs{m}$,
\item for every pair of positive integers $m$ and $n$ such that $m$ divides $n$, two maps
\[\phi_{m|n}^{\star}\colon\fromto{X\angs{n}}{X\angs{m}}\text{\quad and\quad}\phi_{m|n,\star}\colon\fromto{X\angs{m}}{X\angs{n}},\]
\item for every positive integer $m$, every $s,t\in\QQ/\frac{1}{m}\ZZ$, and every rational number $r$ such that $r\equiv(s-t)\mskip-2mu\mod\frac{1}{n}\ZZ$, a homotopy $\rho_r\colon\gamma_m(s)\simeq\gamma_m(t)$,
\end{itemize}
all subject to a long list of coherence conditions determined by the $2$-category structure on $A^{\eff}(\FF_{\cop})$; in particular, one has the Mackey condition: if $m$ and $m'$ both divide $n$, then there is a distinguished homotopy
\[\phi_{m'|n}^{\star}\phi_{m|n,\star}\simeq\sum_{x\in C_n/C_{\lcm(m,m')}}\phi_{\gcd(m,m')|m',\star}\gamma_{\gcd(m,m')}(x)\phi_{\gcd(m,m')|m}^{\star}\colon\fromto{X\angs{m}}{X\angs{m'}}.\]
\item The multiplicative monoid $\NN$ of positive integers acts via $n\colon\goesto{\angs{m}}{\angs{mn}}$ on the $2$-category $\FF_{\cop}$, which in turn induces an action of $\NN$ on $\Sp_{\cop}$ via ``geometric fixed point'' functors. We then define the homotopy theory $\Sp_{\Phi}$ of cyclotomic spectra as the homotopy fixed points for this action:
\[\Sp_{\Phi}\coloneq(\Sp_{\cop})^{h\NN}.\]
\end{enumerate}
There is, of course, a $p$-typical version of this story for any prime $p$, in which all the positive integers that appear above are powers of $p$.

In either case, the homotopy theory of cyclotomic spectra enjoys a simple universal property. We also compare our homotopy theory with the one constructed by Blumberg--Mandell \cite{BM}, thereby proving that their homotopy theory enjoys the very same universal property.

The value of our fully algebraic approach is that nothing special is used in this story about the homotopy theory of spectra, apart from the fact that it's additive and that it admits suitable colimits. We are therefore entitled to replace $\Sp$ with any homotopy theory $\AA$ with these properties in the recipe above and to form the homotopy theories $\AA_{\cop}$ of \emph{cyclonic Mackey functors valued in $\AA$} and $\AA_{\Phi}$ of \emph{cyclotomic objects of $\AA$}.
\begin{itemize}
\item If $\AA$ is taken to be the ordinary category $\Ab$ of abelian groups, we find that what we get is the category $\Ab_{\Phi}$ of ordinary Mackey functors indexed on the divisibility poset, along with additional restriction functors; this provides the structure naturally seen on the big ring of Witt vectors.
\item Similarly, if $\AA$ is taken to be the homotopy theory $\DD(\ZZ)$ of chain complexes of abelian groups, then $\DD(\ZZ)_{\Phi}$ can be compared with Kaledin's cyclotomic complexes \cite{MR3137194}.
\end{itemize}

Finally, we prove a conjecture of Kaledin \cite[(0.1)]{MR3137194}: we show that for any commutative ring $R$, there is a pullback square of ``noncommutative brave new schemes''
\begin{equation*}
\begin{tikzpicture}[baseline]
\matrix(m)[matrix of math nodes,
row sep=4ex, column sep=4ex,
text height=1.5ex, text depth=0.25ex]
{\Spec\DD(R)_{\Psi} & \Spec\Sp_{\Psi} \\
\Spec\DD(R) & \Spec\Sp \\ };
\path[>=stealth,->,font=\scriptsize]
(m-1-1) edge node[above]{} (m-1-2)
edge node[left]{} (m-2-1)
(m-1-2) edge node[right]{} (m-2-2)
(m-2-1) edge node[below]{} (m-2-2);
\end{tikzpicture}
\end{equation*}
inducing, after applying ``quasicoherent sheaves,'' the square of left adjoints
\begin{equation}\label{eqn:Kaledinsquare}
\begin{tikzpicture}[baseline]
\matrix(m)[matrix of math nodes,
row sep=4ex, column sep=4ex,
text height=1.5ex, text depth=0.25ex]
{\DD(R)_{\Phi} & \Sp_{\Phi} \\
\DD(R) & \Sp. \\ };
\path[>=stealth,<-,font=\scriptsize]
(m-1-1) edge node[above]{} (m-1-2)
edge node[left]{} (m-2-1)
(m-1-2) edge node[right]{} (m-2-2)
(m-2-1) edge node[below]{} (m-2-2);
\end{tikzpicture}
\end{equation}

This means that there is a  square of stable, presentable $\infty$-categories and left adjoints
\begin{equation*}
\begin{tikzpicture}[baseline]
\matrix(m)[matrix of math nodes,
row sep=4ex, column sep=4ex,
text height=1.5ex, text depth=0.25ex]
{\DD(R)_{\Psi} & \Sp_{\Psi} \\
\DD(R) & \Sp, \\ };
\path[>=stealth,<-,font=\scriptsize]
(m-1-1) edge node[above]{} (m-1-2)
edge node[left]{} (m-2-1)
(m-1-2) edge node[right]{} (m-2-2)
(m-2-1) edge node[below]{} (m-2-2);
\end{tikzpicture}
\end{equation*}
that exhibits $\DD(R)_{\Psi}$ as the tensor product $\DD(R)\otimes\Sp_{\Psi}$, which we regard as a tensor product of \emph{noncommutative derived rings}, such that the induced diagram
\begin{equation*}
\begin{tikzpicture}[baseline]
\matrix(m)[matrix of math nodes,
row sep=4ex, column sep=4ex,
text height=1.5ex, text depth=0.25ex]
{\Fun^L(\DD(R)_{\Psi},\Sp) & \Fun^L(\Sp_{\Psi},\Sp) \\
\Fun^L(\DD(R),\Sp) & \Fun^L(\Sp,\Sp), \\ };
\path[>=stealth,<-,font=\scriptsize]
(m-1-1) edge node[above]{} (m-1-2)
edge node[left]{} (m-2-1)
(m-1-2) edge node[right]{} (m-2-2)
(m-2-1) edge node[below]{} (m-2-2);
\end{tikzpicture}
\end{equation*}
on \emph{modules} coincides with the square \eqref{eqn:Kaledinsquare} above. (The bottom arrows are well-known to agree already \cite[Th. 4.8.4.1]{HA}.) This we prove in the final section.

\subsection*{Future work} In a sequel to this paper, we show that the topological Hochschild homology ($\THH$) of any Waldhausen $\infty$-category admits the natural structure of a cyclotomic spectrum. Again our approach is fully algebraic, and it produces a functor
\[\THH\colon\fromto{\DD_{\fiss}(\Wald_{\infty})}{\Sp_{\Phi}},\]
which we may think of this as the \emph{noncommutative syntomic realization} functor. We shall address the connection to the ``classical'' syntomic realization in the fullness of time.

Furthermore, the flexible algebraic set-up we have provided has allowed us to develop an important variation on this story. This is the theory of what we call \emph{multicyclonic} and \emph{multicyclotomic} spectra. In this variant, the unit corepresents the higher versions of topological cyclic homology introduced by Brun, Carlsson, Douglas, and Dundas \cite{MR2729005,MR2737802}. We study this in detail in a further sequel.

\subsection*{Origins} In 2009, the first author gave a talk at MIT in which he described an approach to the de Rham--Witt complex based on spectral Mackey functors for $\QQ/\ZZ$. Unfortunately, the construction he offered was inelegant, because the residual actions of the circle group were not, as it were, baked into the pie. It wasn't until early 2015 that the second author showed how to modify the orbit category of $\QQ/\ZZ$ by adding $2$-isomorphisms in just the right way. This lead rapidly to the present article.

\subsection*{Acknowledgments} The authors thank the participants in the Bourbon Seminar -- particularly Emanuele Dotto, Marc Hoyois, Denis Nardin, and Jay Shah -- for many, many hours of productive and insightful discussion. Additionally, much of the writing on this paper was completed while the first author was enjoying the excellent working conditions at the University of Glasgow; he thanks the University and the Fulbright Foundation.

This material is partly based on work done while the second author was supported by the National Science Foundation under agreement No. DMS-1128155. Any opinions, findings and conclusions, or recommendations expressed in this material are those of the authors and do not necessarily reflect the views of the National Science Foundation.

%-------------------------------------------------------------------%
%-------------------------------------------------------------------%
%-------------------------------------------------------------------%

\section{Cyclonic orbits and cyclonic sets}
% \begin{nul}
% Write $T$ for the circle group. The ``finite $T$-sets" of our equivariant homotopy theory will be the $T$-spaces $T/C_N$ as $N$ varies over positive integers, together with their finite unions. It'll be helpful to give a manageable presentation of the full sub-$\iy$-category of the $\iy$-category $\Top^{T}$ spanned by these special objects.
% \end{nul}

\begin{nul} 
By a \emph{strict $2$-category}, we will mean an ordinary category enriched in groupoids. As usual, we will call the objects of the morphism groupoids the \emph{$1$-morphisms} and the isomorphisms of the morphism groupoids the \emph{$2$-isomorphisms}.
\end{nul}

\begin{cnstr}\label{cnstr:NCDelta} From a $2$-category $\CC$, one can obtain an $\iy$-category in two steps: first, we form the nerve of each morphism groupoid to obtain a fibrant simplicial category $\CC_\D$; next we form the simplicial nerve $N(\CC_\D)$ as in \cite[1.1.5.5]{HTT}.

Just as for $1$-categories, this procedure may be conducted in one explicit step as follows: an $n$-simplex of $N(\CC_\D)$ is the following data:
\begin{itemize}
\item a list $(X_0, X_1, \dots, X_n)$ of objects of $\CC$;
\item for each $i \leq j$, a vertex $\phi_{ij}$ of the groupoid $\Mor(X_i, X_j)$, which is the identity if $i = j$;
\item and for each $i \leq j \leq k$, a $2$-isomorphism $\alpha_{ijk}$ from $\phi_{ik}$ to $\phi_{jk}\circ\phi_{ij}$ in $\Mor(X_i, X_k)$, which is the identity if $i = j = k$, such that for each $i \leq j \leq k \leq l$, the identity
\[\alpha_{ijk}\circ\alpha_{ikl} = \alpha_{jkl}\circ\alpha_{ijl}\]
holds in $\Mor(X_i, X_l)$.
\end{itemize}

Since ultimately our work in this paper will take place in the context of $\iy$-categories, if $\CC$ is any $2$-category (even a $1$-category!), we shall abuse notation slightly and write $\CC$ for the corresponding $\iy$-category $N(\CC_{\D})$.
\end{cnstr}

\begin{ntn} We write $\NN$ for the set of positive integers, ordered by divisibility. We write $\NN_0$ for the set of nonnegative integers.
\end{ntn}

\begin{rec}\label{ntn:setofpossibleNs} Recall (\cite[\S 1.3]{MR1867431}) that a \emph{supernatural number} (in the sense of Steinitz) is a formal expression
\[N=\prod_{p}p^{v_{p}(N)},\]
where the product runs over all prime numbers $p$, and each $v_{p}(N)\in\NN_0\cup\{\infty\}$. We write $\NNhat$ for the set of supernatural numbers.

We identify $\NN$ with the subset of $\NNhat$ consisting of those elements $N\in\NNhat$ for which the $v_{p}(N)$ are all finite, and we call such an $N$ \emph{finite}. At the other extreme, one writes $\infty$ (or perhaps $0$) for the maximal element with $v_p(\infty)=\infty$ for any $p$.

Supernatural numbers form a commutative monoid under the obvious notion of multiplication (with $v_p(MN)=v_p(M)+v_p(N)$). We regard $\NNhat$ as a poset under the corresponding notion of divisibility, with respect to which it is a complete lattice.

For any element $N\in\NNhat$, let us denote by
\[\frac{1}{N}\ZZ\subseteq\QQ\]
the subgroup of all rational numbers $r$ such that no prime $p$ divides the denominator of $r$ more than $v_{p}(N)$ times. The assignment $\goesto{N}{\frac{1}{N}\ZZ}$ provides an ordered bijection between $\NNhat$ and the set of additive subgroups of $\QQ$ that contain $\ZZ$, ordered by inclusion. It follows that $\NNhat\cong\widehat{\ZZ}/\widehat{\ZZ}^{\times}$ (which justifies the notation to some extent).
\end{rec}

\begin{ntn} For any element $N\in\NNhat$, let us write $\NN_N$ for the set of finite divisors of $N$, ordered by divisibility. (This is elsewhere called the \emph{nest associated with} $N$.)

For any element $N\in\NNhat$, let us write $C_N$ for the torsion abelian group $\frac{1}{N}\ZZ/\ZZ$ (so that $N$ is the supernatural order of $C_N$ \cite[Df. 1.1.6(iii)]{MR2392026}). For any $m\in\NN_N$, let us write $\angs{m}_N$ for the $C_N$-orbit
\[\frac{1}{N}\ZZ\Big/\frac{1}{m}\ZZ.\]
Note that any transitive $C_N$-set with finite stabilizers is of this form.

We will also contemplate the action of $\frac{1}{N}\ZZ$ on $\angs{m}_N$ via the quotient map $\fromto{\frac{1}{N}\ZZ}{C_N}$. Since we're in an abelian context, we will always write this action additively.
\end{ntn}

\begin{wrn} Because of the potential for confusion, we wish to emphasize that the $C_N$-orbit $\angs{m}_N$ does \emph{not} have cardinality $m$. It turns out that we want to keep track of the cardinality of the \emph{stabilizers} instead, so $\angs{m}_N$ is the (unique up to isomorphism) $C_N$-orbit whose stabilizers have cardinality $m$.
\end{wrn}

\begin{exm} When $N$ is finite, $C_N$ is of course the usual cyclic group of order $N$, and any $C_N$-orbit is of the form $\angs{m}_N$ for some $m\in\NN_N$.
\end{exm}

\noindent We are particularly interested in two infinite examples: $N=\infty$ and $N=p^{\infty}$.

\begin{exm} When $N=\infty$, we have $C_N=\QQ/\ZZ$. We will often just write $\angs{m}$ for the $\QQ/\ZZ$-orbit $\angs{m}\coloneq\QQ/\frac{1}{m}\ZZ$.
\end{exm}

\begin{exm} If $p$ is a prime number and if $N=p^{\infty}$, then $C_N$ is the $p$-quasicyclic group $\QQ_p/\ZZ_p$. We are contemplating $\QQ_p/\ZZ_p$-orbits of the form
\[[k]_p=\langle p^k\rangle_{p^{\infty}}\coloneq\QQ_p\Big/\frac{1}{p^k}\ZZ_p\]
for some $k\in\NN_0$.
\end{exm}

\begin{dfn} Suppose $N\in\NNhat$. We write $\OO_{C_N}$ for the $1$-category whose objects are $C_N$-orbits of the form $\angs{m}_N$ and whose morphisms are $C_N$-equivariant maps. We call $\OO_{C_N}$ the \emph{degree $N$ orbit category of $C_N$}.

If $\angs{m}_N$ and $\angs{n}_N$ are two $C_N$-orbits and if $u$ and $v$ are two $C_N$-equivariant maps
\[\fromto{\angs{m}_N}{\angs{n}_N},\]
then an \emph{intertwiner from $u$ to $v$} is a number $r\in\frac{1}{N}\ZZ$ such that for any $z\in\angs{m}_N$, one has
\[v(z)\equiv r+u(z)\mskip-3mu\mod\frac{1}{n}\ZZ.\]
We now write $\OO_{\cop_N}$ for the following strict $2$-category.
\begin{itemize}
\item An object of $\OO_{\cop_N}$ is a $C_N$-orbit.
\item If $\angs{m}_N$ and $\angs{n}_N$ are $C_N$-orbits, then a $1$-morphism $\fromto{\angs{m}_N}{\angs{n}_N}$ is a $C_N$-equivariant map.
\item A $2$-isomorphism $\fromto{u}{v}$ is an intertwiner from $u$ to $v$.
\end{itemize}
Vertical composition in the morphism groupoid
\[\Mor_{\OO_{\cop_N}}(\angs{m}_N,\angs{n}_N)\]
is given by addition of integers, and for three objects $\angs{l}_N$, $\angs{m}_N$, and $\angs{n}_N$, the composition functor
\[\fromto{\Mor_{\OO_{\cop_N}}(\angs{l}_N,\angs{m}_N)\times\Mor_{\OO_{\cop_N}}(\angs{m}_N,\angs{n}_N)}{\Mor_{\OO_{\cop_N}}(\angs{l}_N,\angs{n}_N)}\]
is given by composition of functions, and the horizontal composition of $2$-morphisms is also given by addition of integers. We call $\OO_{\cop_N}$ the \emph{degree $N$ cyclonic orbit $2$-category}.
\end{dfn}

\begin{exm} The degree $1$ cyclonic orbit $2$-category is a $2$-groupoid with a unique object $\angs{1}_1$ and a unique $1$-morphism in which the set of $2$-morphisms is $\ZZ$. In other words, $\OO_{\cop_1}$ is precisely $BB\ZZ$.
\end{exm}

\begin{nul} A $2$-simplex of $\OO_{\cop_N}$ (viewed as an $\infty$-category) is a diagram
\begin{equation*}
\begin{tikzpicture}[baseline]
\matrix(m)[matrix of math nodes,
row sep=2ex, column sep=2ex,
text height=1.5ex, text depth=0.25ex]
{ &\angs{m}_N&  \\
&r& \\[-1.5ex]
\angs{l}_N && \angs{n}_N \\ };
\path[>=stealth,->,font=\scriptsize]
(m-3-1) edge[inner sep=0.75pt] node[above left]{$\phi$} (m-1-2)
edge node[below]{$\chi$} (m-3-3)
(m-1-2) edge[inner sep=0.75pt] node[above right]{$\psi$} (m-3-3);
\end{tikzpicture}
\end{equation*}
in which $\phi$, $\psi$, and $\chi$ are equivariant maps, and $r$ is an intertwiner from $\chi$ to $\psi\circ\phi$.
\end{nul}

\begin{nul} Observe that the morphism groupoid between any two objects of $\OO_{\cop_N}$ is a connected groupoid isomorphic to the group $\ZZ$, and so all of the mapping spaces in $\OO_{\cop_N}$ are circles. In particular, we have the object $(C_N, g)$, where $C_N$ is the cyclic group and $g$ is translation by the generator $1/N$, and $\End_{\OO_{\cop_N}} (C_N, g)$ is equivalent to the circle group $T$. It follows that the functor 
\[h_N \colon \OO_{\cop_N} \to \Top\]
 corepresented by $(C_N, g)$ lifts to a functor 
\[h_N^{T} \colon \OO_{\cop_N} \to \Top^{T},\]
where $\Top^{T}$ is the $\iy$-category of Top complexes with $T$-action and equivariant maps. In fact, $h_N^{T}$ is fully faithful and its essential image is spanned by the $T$-spaces $T/C_k$ as $k$ ranges over divisors of $N$.
\end{nul}

\begin{wrn} Since $\OO_{C_N}$ has the same objects and $1$-morphisms as $\OO_{\cop_N}$, we may consider the inclusion
\[\psi\colon\fromto{\OO_{C_N}}{\OO_{\cop_N}},\]
but please observe that this is \emph{not} the inclusion of a subcategory in the sense of \cite[\S 1.2.11]{HA}. Indeed, it follows from our computation of the mapping spaces that the functor
\[s\colon\fromto{\OO_{\cop_N}}{\NN_N}\]
defined by the assignment $\goesto{\angs{m}_N}{m}$ (i.e., the order of a stabilizer) exhibits $\NN_N$ as the homotopy category $h\OO_{\cop_N}$, and this does not contain $\OO_{C_N}$ as a subcategory.
\end{wrn}

\begin{ntn} When $N=\infty$, we drop the subscript and write $\OO_{\cop}$ for $\OO_{\cop_{\infty}}$.
\end{ntn}

\begin{dfn} For any $N\in\NNhat$, an \emph{$N$-cyclonic space} is a left fibration
\[X\to\OO_{\cop_N}^{\op}.\]
More generally, an \emph{$N$-cyclonic $\iy$-category} is a cocartesian fibration
\[X\to\OO_{\cop_N}^{\op}.\]
\end{dfn}

\begin{ntn} We shall write $\Top_{\cop_N}$ for the simplicial nerve of the full simplicial subcategory of $s\Set_{/\OO_{\cop_N}^{\op}}^{f}$ of simplicial sets over $\OO_{\cop_N}$ that is spanned by the left fibrations. Similarly, we write $\Cat_{\infty,\cop_N}$ for the simplicial nerve of the full simplicial subcategory of $s\Set_{/\OO_{\cop_N}^{\op}}^{+,f}$ of marked simplicial sets over $\OO_{\cop_N}$ that is spanned by the cocartesian fibrations (with exactly the cocartesian edges marked).
\end{ntn}

\begin{nul} Of course by straightening/unstraightening, one has equivalences of $\infty$-cate\-gories
\[\Top_{\cop_N}\simeq\Fun(\OO_{\cop_N}^{\op},\Top)\text{\quad and\quad}\Cat_{\infty,\cop_N}\simeq\Fun(\OO_{\cop_N}^{\op},\Cat_{\infty}).\]
Consequently, an $N$-cyclonic space (respectively, $\infty$-category) is essentially the data of a space (resp., $\infty$-category) $X$ together with a $T$-action and \emph{genuine} (not homotopy) fixed point spaces (resp., $\infty$-categories) $X^{C_M}$ for all the finite subgroups $C_M\subset T$ such that $M$ divides $N$. 
\end{nul}

\begin{dfn} For any $N\in\NNhat$, the \emph{$\iy$-category $\FF_{\cop_N}$ of finite cyclonic sets of degree $N$} is the closure of $\OO_{\cop_N}$ under formal finite coproducts. That is, $\FF_{\cop_N}$ is the smallest full subcategory of the $\iy$-category $\Top_{\cop_N}$ of $N$-cyclonic spaces that contains the essential image of the Yoneda embedding and is closed under finite coproducts.
\end{dfn}

\begin{nul} More explicitly, $\FF_{\cop_N}$ can be identified with the following $2$-category.
\begin{itemize}
\item The objects are $C_N$-sets whose stabilizers are all finite.
\item A $1$-morphism $\fromto{X}{Y}$ between two such $C_N$-sets is a $C_N$-equivariant map.
\item For any two $1$-morphisms $u,v\colon\fromto{X}{Y}$, a $2$-isomorphism is an \emph{intertwiner} from $u$ to $v$, by which we mean a tuple $(r_U)_{U\in\Orb(X)}$ of intertwiners -- indexed by the set $\Orb(X)$ of orbits of $X$ -- from $u|_U$ to $v|_U$.
\end{itemize}
\end{nul}

\begin{nul} Clearly the functor $\psi$ above extends to a functor
\[\psi\colon\fromto{\FF_{C_N}}{\FF_{\cop_N}},\]
where $\FF_{C_N}$ is the $1$-category of finite $C_N$-sets. This is also \emph{not} the inclusion of a subcategory.
\end{nul}

\begin{nul} The category $\OO_G$ of $G$-orbits for a finite group $G$ has the magical property that it acquires pullbacks upon adjoining formal finite coproducts: the pullback of a diagram of $G$-orbits exists as a finite $G$-set, which is after all just a finite coproduct of $G$-orbits in a canonical fashion. This is tautologically equivalent to the following property: for every functor
\[W\colon\Lambda^2_2 \to \OO_G\]
the category $(\OO_G)_{/W}$ has finitely many connected components, and each connected component admits a final object. We say that $\OO_G$ \emph{admits multipullbacks}.

We will apply the powerful Mackey functor machinery of \cite{mack1} to our categories of cyclonic sets, and for this, it will be necessary to show that the cyclonic orbit categories also admit multipullbacks.  (In the language of \cite{BDGNS}, they are \emph{orbital} $\iy$-categories.)
\end{nul}

\begin{dfn}
Let $\CC$ be a $2$-category, and let $X$ be an object of $\CC$. We define a category enriched in groupoids $\CC_{/X}$, the homotopically correct overcategory of $X$, as follows:
\begin{itemize}
\item An object of $\CC_{/X}$ is a morphism $f\colon Y \to X$ in $\CC$.
\item A morphism in $\CC_{/X}$ from $f\colon Y \to X$ to $g\colon Z \to X$ is a morphism $h\colon Y \to Z$ in $\CC$ together with a 2-morphism $k\colon f \to g \circ h$ in $\CC(Y, X)$.
\item An edge between $(h_1, k_1)$ and $(h_2, k_2)$ is a 2-morphism $q\colon h_1 \to h_2$ in $\CC(Y, Z)$ such that the identity $(g \circ q) \circ k_1 = k_2$ holds in $\CC(X, Z)$.
\end{itemize}
It's easy to see that this construction is compatible with the formation of overcategories in the $\iy$-categorical sense; that is, one has a natural isomorphism $N((\CC_{/X})_{\D})\cong (N(\CC_{\D}))_{/X}$.
\end{dfn}

\begin{nul}\label{disovecat}
We now investigate the overcategories of objects of $\OO_{\cop_N}$; in particular, we'll see that they're simply $1$-categories. Let $\angs{n}_N$ be an object of $\OO_{\cop_N}$ and let
\begin{equation*}
\begin{tikzpicture}[baseline]
\matrix(m)[matrix of math nodes,
row sep=2ex, column sep=2ex,
text height=1.5ex, text depth=0.25ex]
{ &\angs{m}_N&  \\
&s& \\[-1.5ex]
\angs{l}_N && \angs{n}_N \\ };
\path[>=stealth,->,font=\scriptsize]
(m-3-1) edge[inner sep=0.75pt] node[above left]{$\phi$} (m-1-2)
edge node[below]{$\alpha$} (m-3-3)
(m-1-2) edge[inner sep=0.75pt] node[above right]{$\beta$} (m-3-3);
\end{tikzpicture}
\begin{tikzpicture}[baseline]
\matrix(m)[matrix of math nodes,
row sep=2ex, column sep=2ex,
text height=1.5ex, text depth=0.25ex]
{ &\angs{m}_N&  \\
&t& \\[-1.5ex]
\angs{l}_N && \angs{n}_N \\ };
\path[>=stealth,->,font=\scriptsize]
(m-3-1) edge[inner sep=0.75pt] node[above left]{$\psi$} (m-1-2)
edge node[below]{$\alpha$} (m-3-3)
(m-1-2) edge[inner sep=0.75pt] node[above right]{$\beta$} (m-3-3);
\end{tikzpicture}
\end{equation*}
be a parallel pair of morphisms in $(\OO_{\cop_N})_{/\angs{n}_N}$. Then unwinding the definitions shows that an isomorphism from the left diagram to the right diagram is given by $r\in\frac{1}{N}\ZZ$ with denominator dividing $N$ such that for any $x\in\angs{l}_N$, one has $\psi(x) = r + \phi(x)$ and $t=r+s$. Such an $r$ is unique if it exists. In particular, $(\OO_{\cop_N})_{/\angs{n}_N}$ is equivalent to a $1$-category. Moreover, every morphism in $(\OO_{\cop_N})_{/\angs{n}_N}$ is uniquely isomorphic to a triangle whose filler is the identity $2$-morphism, which shows that the natural functor
\[\psi_S\colon(\OO_{C_N})_{/\angs{n}_N} \to (\OO_{\cop_N})_{/\angs{n}_N}\]
is an equivalence.
\end{nul}

\begin{prp}\label{cycmulpul}
For each $N\in\NNhat$, the $2$-category $\OO_{\cop_N}$ admits multipullbacks.
\begin{proof}
Let $\mb{C}$ be any $\iy$-category and let $W \colon \La_2^2 \to \mb{C}$ be a functor. Then we have a homotopy pullback diagram of categories
\[\begin{tikzcd}
\mb{C}_{/W} \ar{r} \ar{d} & \mb{C}_{/W(0)} \ar{d} \\
\mb{C}_{/W(1)} \ar{r} & \mb{C}_{/W(2)}.
\end{tikzcd}\]
Note that any functor $W\colon\La_2^2\to\OO_{\cop_N}$ can be lifted in a unique fashion to a functor $W'\colon\La_2^2\to\OO_{C_N}$. Together with \ref{disovecat}, this implies that 
\[\psi_W\colon(\OO_{C_N})_{/W'}\to(\OO_{\cop_N})_{/W}\]
is an equivalence of categories, and so we deduce the existence of multipullbacks in $\OO_{\cop_N}$ from the existence of multipullbacks in $\OO_{C_N}$.
\end{proof}
\end{prp}

\begin{cor}
For each $N\in\NNhat$, the $2$-category $\FF_{\cop_N}$ is a disjunctive $\iy$-category.
\begin{proof}
Obviously $\FF_{\cop_N}$ has finite coproducts, and it has pullbacks by \ref{cycmulpul}. It follows from the fact that the coproducts and pullbacks in $\FF_{\cop_N}$ are the coproducts and pullbacks in the $\iy$-category $\categ{P}(\OO_{\cop_N})$ that pullbacks distribute over coproducts and coproducts are disjoint and universal. 
\end{proof}
\end{cor}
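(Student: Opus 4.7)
The plan is to deduce all required properties of $\FF_{\cop_N}$ by embedding it in the presheaf $\iy$-category $\categ{P}(\OO_{\cop_N})$, which is an $\iy$-topos, and transferring the relevant structural properties. Recall that being disjunctive amounts to: having finite coproducts, having pullbacks, and having coproducts that are both disjoint and universal (pullbacks distribute over coproducts). The inclusion $\FF_{\cop_N} \inj \categ{P}(\OO_{\cop_N})$ is a full subcategory closed by definition under formal finite coproducts, so the first property is immediate, and the third will be automatic once we know pullbacks exist and are preserved by the inclusion.

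First I would dispose of the existence of pullbacks. The key observation is the standard dictionary between multipullbacks in a small $\iy$-category $\CC$ and honest pullbacks of representables in $\categ{P}(\CC)$: if $\CC$ admits multipullbacks, then for any cospan of representables $h_X \to h_Z \leftarrow h_Y$ in $\categ{P}(\CC)$, the pullback $h_X \times_{h_Z} h_Y$ is the finite coproduct of the representables on the finitely many terminal objects of $\CC_{/W}$. Applying this to $\CC = \OO_{\cop_N}$ with Proposition \ref{cycmulpul} in hand, we conclude that pullbacks of representables in $\categ{P}(\OO_{\cop_N})$ lie in $\FF_{\cop_N}$. For a cospan of arbitrary objects of $\FF_{\cop_N}$, write each as a finite coproduct of representables; since coproducts are universal in the presheaf $\iy$-category, the pullback is a finite coproduct of pullbacks of representables, hence still in $\FF_{\cop_N}$. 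This shows that $\FF_{\cop_N}$ admits pullbacks and that the inclusion into $\categ{P}(\OO_{\cop_N})$ preserves them.

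Having established that the inclusion preserves both finite coproducts and pullbacks, the remaining properties come for free. Coproducts are disjoint in $\categ{P}(\OO_{\cop_N})$ because it is an $\iy$-topos, and disjointness is a condition on certain pullback squares built from coproducts; these squares are computed identically in the subcategory $\FF_{\cop_N}$. The same reasoning applies to universality: distributivity of pullbacks over finite coproducts is encoded by certain equivalences of pullback squares, and these are inherited from $\categ{P}(\OO_{\cop_N})$.

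The main obstacle here is really confined to Step 2, and more specifically the passage from multipullbacks in $\OO_{\cop_N}$ to pullbacks of arbitrary objects in $\FF_{\cop_N}$: one must ensure that bookkeeping the $2$-categorical data does not spoil the usual argument. This is controlled by the observation in \ref{disovecat} that the overcategories $(\OO_{\cop_N})_{/\angs{n}_N}$ are already $1$-categorical and in fact equivalent via $\psi_S$ to overcategories of $\OO_{C_N}$; once one has this equivalence, the multipullback description reduces to the classical fact for orbit categories of finite groups, and everything else is formal.
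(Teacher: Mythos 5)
Your proof is correct and takes essentially the same approach as the paper: embed $\FF_{\cop_N}$ into $\categ{P}(\OO_{\cop_N})$, use Proposition \ref{cycmulpul} to get pullbacks, and inherit disjointness and universality from the presheaf $\iy$-category. You spell out the reduction from pullbacks of arbitrary objects to pullbacks of representables more explicitly than the paper does, but the argument is the same.
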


\begin{dfn}\label{dfn:IMN} Suppose $M,N\in\NNhat$ such that $M$ divides $N$. There is an obvious inclusion $\into{\NN_M}{\NN_N}$, and our presentations for the groups $C_M$ and $C_N$ give a canonical inclusion $C_M\inj C_N$ determined by the condition that $1/M$ is carried to $1/M$. We therefore define a functor $I_M^N\colon\OO_{\cop_M}\to\OO_{\cop_N}$ as follows.
\begin{itemize}
\item On objects and $1$-morphisms, $I_M^N$ is the induction functor
\[-\times_{C_M}C_N\colon\goesto{\angs{n}_M}{\angs{n}_N}.\]
\item On $2$-morphisms, which are after all just suitable rational numbers, $I_M^N$ is the inclusion.
\end{itemize}
\end{dfn}

\begin{nul} The following diagram commutes:
\[\begin{tikzcd}[row sep = large]
\OO_{\cop_M} \ar{r}{s} \ar{d}[left]{I_M^N} & \NN_M \ar[hookrightarrow]{d}{} \\
\OO_{\cop_N} \ar{r}[below]{s} & \NN_N,
\end{tikzcd}\]
the functor $I_{M}^{N}$ now formally extends to a functor
\[I_{M}^{N}\colon\FF_{\cop_M}\to\FF_{\cop_N},\]
which makes the diagram
\[\begin{tikzcd}[row sep = large]
\FF_{C_M} \ar{r}{\psi} \ar{d}[left]{\text{Ind}_{C_M}^{C_N}} & \FF_{\cop_M} \ar{d}{I_M^N} \\
\FF_{C_N} \ar{r}[below]{\psi} & \FF_{\cop_N}
\end{tikzcd}\]
commute up to homotopy. We claim that $I_M^N$ preserves pullbacks; indeed, every pullback square in $\FF_{\cop_M}$ is, up to equivalence, the image of a pullback square in the category $\FF_{C_M}$ of finite $C_N$-sets, and both $\text{Ind}_{C_M}^{C_N}$ and $\psi$ preserve pullbacks.
\end{nul}

\begin{nul} Suppose $N\in\NNhat$. Since $\frac{1}{N}\ZZ$ can be written as the filtered union of the subgroups $\frac{1}{M}\ZZ$ over the set of finite divisors $M$ of $N$, it follows that we can write the $2$-categories above as filtered colimits:
\[\OO_{\cop_N}\simeq\underset{M\in\NN_{N}}{\colim}\ \OO_{\cop_M}\textrm{\quad and\quad}\FF_{\cop_N}\simeq\underset{M\in\NN_{N}}{\colim}\ \FF_{\cop_M},\]
and the $\infty$-categories above as cofiltered limits:
\[\Top_{\cop_N}\simeq\lim_{M\in\NN_{N}^{\op}}\Top_{\cop_M}\textrm{\quad and\quad}\Cat_{\infty,\cop_N}\simeq\lim_{M\in\NN_{N}^{\op}}\Cat_{\infty,\cop_M}.\]

That is, the assignments $\goesto{N}{\OO_{\cop_N}}$ and $\goesto{N}{\FF_{\cop_N}}$ define functors
\[\OO_{\cop_{\ast}}\colon\fromto{\NNhat}{\Cat_{\infty}}\text{\quad and\quad}\FF_{\cop_{\ast}}\colon\fromto{\NNhat}{\Cat_{\infty}}\]
that are left Kan extended from $\NN$, and the functors $\goesto{N}{\Top_{\cop_N}}$ and $\goesto{N}{\Cat_{\infty,\cop_N}}$ define functors
\[\Top_{\cop_{\ast}}\colon\fromto{\NNhat^{\op}}{\Cat_{\infty}}\text{\quad and\quad}\Cat_{\infty,\cop_{\ast}}\colon\fromto{\NNhat^{\op}}{\Cat_{\infty}}\]
that are right Kan extended from $\NN$.
\end{nul}

%-------------------------------------------------------------------%

\section{Cyclonic spectra} 

\begin{nul} Suppose $N\in\NNhat$. Then since $\FF_{\cop_N}$ is a disjunctive $\infty$-category in the sense of \cite{mack1}, one can form its \emph{effective Burnside $\infty$-category} $A^{\eff}(\FF_{\cop_N})$. Now $A^{\eff}$ increases categorical level by one, so these $\infty$-categories are a priori $3$-categories, but a closer look reveals that in fact they are again simply $2$-categories.

Indeed, $A^{\eff}(\FF_{\cop_N})$ can be described in the following manner. An object is a finite cyclonic set of degree $N$, hence a disjoint union of cyclonic orbits. Between cyclonic orbits, a $1$-morphism is a sum of \emph{span diagrams}
\begin{equation*}
\begin{tikzpicture}[baseline]
\matrix(m)[matrix of math nodes, 
row sep={7ex,between origins}, column sep={7ex,between origins}, 
text height=1.5ex, text depth=0.25ex] 
{&\angs{l}_N&\\ 
\angs{m}_N&&\angs{n}_N.\\}; 
\path[>=stealth,->,font=\scriptsize] 
(m-1-2) edge (m-2-1) 
edge (m-2-3); 
\end{tikzpicture}
\end{equation*}
A $2$-morphism between two such diagrams is a diagram
\begin{equation*}
\begin{tikzpicture}[baseline]
\matrix(m)[matrix of math nodes, 
row sep={7ex,between origins}, column sep={7ex,between origins}, 
text height=1.5ex, text depth=0.25ex] 
{&&[-3ex]\angs{l}_N&[-3ex]&\\ 
\angs{m}_N&s&&t&\angs{n}_N\\
&&\angs{l'}_N&&\\}; 
\path[>=stealth,->,font=\scriptsize] 
(m-1-3) edge (m-2-1)
edge[inner sep=0.75pt] node[right]{$\phi$} (m-3-3)
edge (m-2-5) 
(m-3-3) edge (m-2-1) 
edge (m-2-5); 
\end{tikzpicture}
\end{equation*}
of $\FF_{\cop_N}$, where $\phi$ is an isomorphism, and $s$ and $t$ are intertwiners. Now for any diagrams
\begin{equation*}
\begin{tikzpicture}[baseline]
\matrix(m)[matrix of math nodes, 
row sep={7ex,between origins}, column sep={7ex,between origins}, 
text height=1.5ex, text depth=0.25ex] 
{&&[-3ex]\angs{l}_N&[-3ex]&\\ 
\angs{m}_N&s&&t&\angs{n}_N\\
&&\angs{l'}_N&&\\}; 
\path[>=stealth,->,font=\scriptsize] 
(m-1-3) edge (m-2-1)
edge[inner sep=0.75pt] node[right]{$\phi$} (m-3-3)
edge (m-2-5) 
(m-3-3) edge (m-2-1) 
edge (m-2-5); 
\end{tikzpicture}
\text{\quad and\quad}
\begin{tikzpicture}[baseline]
\matrix(m)[matrix of math nodes, 
row sep={7ex,between origins}, column sep={7ex,between origins}, 
text height=1.5ex, text depth=0.25ex] 
{&&[-3ex]\angs{l}_N&[-3ex]&\\ 
\angs{m}_N&s'&&t'&\angs{n}_N,\\
&&\angs{l'}_N&&\\}; 
\path[>=stealth,->,font=\scriptsize] 
(m-1-3) edge (m-2-1)
edge[inner sep=0.75pt] node[right]{$\phi'$} (m-3-3)
edge (m-2-5) 
(m-3-3) edge (m-2-1) 
edge (m-2-5); 
\end{tikzpicture}
\end{equation*}
a $3$-morphism from the $2$-morphism on the left to the $2$-morphism on the right is an intertwiner $r$ from $\phi$ to $\phi'$ such that
\[s'=s+r\text{\quad and\quad}t'=t+r,\]
which is clearly unique if it exists.

Passing to a skeleton, one sees that the mapping space in $A^{\eff}(\FF_{\cop_N})$ between cyclonic orbits $\angs{m}_N$ and $\angs{n}_N$ can be described as the following groupoid: an object is a formal sum of divisors $l$ of $\gcd(m,n)$, and a $1$-isomorphism is a family of $1$-automorphisms of such divisors $l$, which are equivalence classes of pairs $(s,t)$ consisting of an element $s\in\frac{1}{m}\ZZ$ and an element $t\in\frac{1}{n}\ZZ$, where two pairs $(s,t)$ and $(s',t')$ are equivalent if and only if $s'-s=t'-t$. In other words, $\Map_{A^{\eff}(\FF_{\cop_N})}(\angs{m}_N,\angs{n}_N)$ is the free $E_{\infty}$ space generated by the disjoint union
\[\coprod_{l\in\NN_{\gcd(m,n)}}B\left(\left(\frac{1}{m}\ZZ\oplus\frac{1}{n}\ZZ\right)\Big/\frac{1}{\gcd(m,n)}\ZZ\right)\simeq\coprod_{l\in\NN_{\gcd(m,n)}}B\left(\frac{1}{\lcm(m,n)}\ZZ\right).\]

Now we are interested in \emph{Mackey functors for $\FF_{\cop_N}$.} These are simply direct-sum-preserving functors from $A^{\eff}(\FF_{\cop_N})$ to a selected additive $\infty$-category.
\end{nul}

\begin{dfn} Suppose $N\in\NNhat$. We define the $\infty$-category of \emph{$N$-cyclonic spectra} as the $\infty$-category of Mackey functors
\[\Sp_{\cop_N}\coloneq\Mack(\FF_{\cop_N};\Sp).\]

More generally, for any additive $\infty$-category $\AA$, we define the $\infty$-category of \emph{$N$-cyclonic Mackey functors in $A$} as the $\infty$-category
\[\AA_{\cop_N}\coloneq\Mack(\FF_{\cop_N};\AA).\]
\end{dfn}

Before we dive into a study of cyclonic spectra proper, let us first dip a toe in the water by contemplating what happens when $\AA$ is merely a $1$-category.
\begin{exm}\label{exm:cyclonic1cat} Suppose $\AA$ an ordinary additive category, and suppose $N\in\NNhat$. Then the category $\AA_{\cop_N}$ is the ordinary category of direct-sum-preserving functors
\[\fromto{hA^{\eff}(\FF_{\cop_N})}{\AA}\]
Importantly, the result is actually \emph{less} structure than a Mackey functor for $C_N$, because the actions are all trivialized by the $2$-isomorphisms in $\FF_{\cop_N}$. Consequently, $\AA_{\cop_N}$ is nothing more than the category of Mackey functors for the divisibility poset $\Phi_N$ is the following category. That is, an object is a collection of the following data:
\begin{itemize}
\item for any finite divisor $m$ of $N$, an object $X\angs{m}$ of $\AA$; and
\item for any finite divisors $m$ and $n$ of $N$ such that $m$ divides $n$, two morphisms
\[\phi_{m|n,\star}\colon\fromto{X\angs{m}}{X\angs{n}}\text{\quad and\quad}\phi_{m|n}^{\star}\colon\fromto{X\angs{n}}{X\angs{m}};\]
\end{itemize}
all subject to the following conditions:
\begin{itemize}
\item for any finite divisors $u$, $v$, and $w$ of $N$ such that $u$ divides $v$ and $v$ divides $w$, one has
\[\phi_{v|w,\star}\phi_{u|v,\star}=\phi_{u|w,\star}\text{\quad and\quad}\phi_{u|v}^{\star}\phi_{v|w}^{\star}=\phi_{u|w}^{\star};\]
and 
\item for any finite divisor $m$ of $N$ and any divisors $k$ and $l$ of $m$, one has
\[\phi_{l|m}^{\star}\phi_{k|m,\star}=\frac{m}{\lcm(k,l)}\phi_{\gcd(k,l)|l,\star}\phi_{\gcd(k,l)|k}^{\star}\colon\fromto{X\angs{k}}{X\angs{l}}.\]
\end{itemize}
A morphism $\psi\colon\fromto{X}{Y}$ from one such object to another is a collection of maps
\[\psi_m\colon\fromto{X\angs{m}}{Y\angs{m}}\]
for each finite divisor $m$ of $N$ such that
\[\psi_n\phi_{m|n,\star}=\phi_{m|n,\star}\psi_m\text{\quad and\quad}\psi_m\phi_{m|n}^{\star}=\phi_{m|n}^{\star}\psi_n.\]
\end{exm}

\begin{subexm} Perhaps the most well-known example of this sort of structure is provided by the various rings of Witt vectors. To describe, let us begin with the Hopf algebra $\Symm(N)$. 

In particular, for any commutative ring $R$ and any natural number $n$, one may contemplate the ring $\WW_{\mskip-2mu\angs{m}}(R)$ of Witt vectors relative to the truncation set of positive divisors of $m$. We recall that this is the set $R^{\NN_m}$, equipped with the addition and multiplication maps that are uniquely determined by the condition that the \emph{ghost component} map
\[\goesto{(w_k)_{k\in\NN_m}}{(z_k)_{k\in\NN_m}=\left(\sum_{l\in\NN_k}lw_l^{k/l}\right)_{k\in\NN_m}}\]
is a ring map, functorially in $R$.

When $m$ divides $n$, there are two maps that appear between these objects: the \emph{Frobenius}
\[F_{m|n}\colon\fromto{\WW_{\mskip-2mu\angs{n}}(R)}{\WW_{\mskip-2mu\angs{m}}(R)},\]
given on ghost components by $\goesto{(z_l)_{l\in\NN_n}}{(z_{kn/m})_{k\in\NN_m}}$ and the \emph{Verschiebung}
\[V_{m|n}\colon\fromto{\WW_{\mskip-2mu\angs{m}}(R)}{\WW_{\mskip-2mu\angs{n}}(R)},\]
given on Witt components by $\goesto{(w_k)_{k\in\NN_m}}{(\overline{w}_l)_{l\in\NN_n}}$, where $\overline{w}_l=w_l$ if $l$ divides $m$ and $\overline{w}_l=0$ otherwise. It is a simple matter to see that these maps satisfy the conditions listed above. Consequently, these two maps endow the assignment $\goesto{\angs{m}}{\WW_{\mskip-2mu\angs{m}}(R)}$ with the structure of an object of $\Mod(R)_{\cop}$. Note, however, that the restriction maps that are usually taken as part of the structure on this assignment are not emergent from the cyclonic structure alone. To get these, one has to exhibit a cyclotomic structure. We will turn to this matter in the next section.

We can now restrict this Mackey functor to $N=p^{\infty}$ to get the $p$-typical Witt vectors everyone knows and loves.
\end{subexm}

\begin{exm}\label{exm:representableMack} For any $N\in\NNhat$ and any finite divisor $m$ of $N$, one has the Mackey functor
\[S^{\langle m\rangle}\colon\fromto{A^{\eff}(\FF_{\cop_N})}{\Sp}\]
corepresented by $\langle m\rangle_N$. By \cite[Cor. 9.2.1, Th. 13.12]{mack1} this carries any object $\langle n\rangle_N$ to the direct sum $K$-theory of the $2$-category
\[(\FF_{\cop_N})_{/\langle m\rangle}\times_{\FF_{\cop_N}}(\FF_{\cop_N})_{/\langle n\rangle}\simeq(\FF_{C_N})_{/\langle m\rangle}\times_{\FF_{\cop_N}}(\FF_{C_N})_{/\langle n\rangle}\]
(i.e., the $K$-theory in which the ingressives are declared to be the summand inclusions). By Barratt--Priddy--Quillen, this is the suspension spectrum
\[\bigvee_{l\in\NN_{\gcd(m,n)}}\Sigma^{\infty}_+B\left(\frac{1}{\lcm(m,n)}\ZZ\right).\]
\end{exm}

% \begin{exm} Suppose $R$ a commutative ring and $N\in\NNhat$. We may apply the left adjoint $C^{\dg}$ to the dg nerve of \cite[Cnstr. 1.3.1.6]{HA} to the effective Burnside $2$-category $A^{\eff}(\FF_{\cop_N})$, and one may then force the sum therein to be given by the coproduct of spans. The result is an additive dg category $A^{\dg}_R(\FF_{\cop_N})$, which may be described as follows.
% \begin{itemize}
% \item The objects are formal direct sums $\bigoplus_{i\in I}\angs{m_i}_N$ of cyclonic orbits $\angs{m_i}_N$, so that the $\Hom$ complexes will be determined by the formula
% \[\Hom_{\bullet}\left(\bigoplus_{i\in I}\angs{m_i}_N,\bigoplus_{j\in J}\angs{n_j}_N\right)\simeq\bigoplus_{i\in I}\bigoplus_{j\in J}\Hom_{\bullet}(\angs{m_i}_N,\angs{n_j}_N).\]
% \item Between two cyclonic orbits $\angs{m}_N$ and $\angs{n}_N$, the $\Hom$ complex $\Hom_{\bullet}(\angs{m}_N,\angs{n}_N)$ will be a $3$-term complex
% \[\Hom_{2}(\angs{m}_N,\angs{n}_N)\to\Hom_{1}(\angs{m}_N,\angs{n}_N)\to\Hom_{0}(\angs{m}_N,\angs{n}_N),\]
% where
% \begin{itemize}
% \item $\Hom_{0}(\angs{m}_N,\angs{n}_N)$ is the free $R$-module generated by the set $\Phi_{(m,n)}$ of divisors of $(m,n)$;
% \item 
% \end{itemize}
% \end{itemize}
% \end{exm}

\begin{exm} Suppose $R$ a commutative ring and $N\in\NNhat$. We consider the (unbounded) derived $\infty$-category $\DD(R)$, which is also equivalent to $\Mod(HR)$. As with the $\infty$-category of cyclonic spectra, the $\infty$-category $\DD(R)_{\cop_N}$ of cyclonic complexes of $R$-modules admits a compact generator, which is given by
\[\bigoplus_{m\in\NN_N}M\angs{m}_N,\]
where $M\angs{m}_N$ is the cyclotomic complex corresponding to $HR\wedge\SS^{\angs{m}}$.

The Schwede--Shipley theorem now applies to provide a tilting equivalence
\[\DD(R)_{\cop_N}\simeq\Mod(E),\]
where $E$ is the endomorphism algebra
\[E\coloneq\End\left(\bigoplus_{m\in\NN_N}M\angs{m}_N\right).\]
Accordingly, we can identify $\DD(R)_{\cop_N}$ as the dg nerve \cite[Cnstr. 1.3.1.6]{HA} of a dg category of dg modules over the following dga: it is a quotient of the free dga
\[R\left[\left\{\alpha_{b,k,a},\epsilon_{b,k,a}\ |\ a,b\in\NN_N,\ k\in\NN_{\gcd(a,b)}\right\}\right],\]
where $\deg(\alpha_{b,k,a})=0$ and $\deg(\epsilon_{b,k,a})=1$, subject only to the following relations:
\begin{itemize}
\item for any $a,b,c,d\in\NN_N$, any $k\in\NN_{\gcd(a,b)}$, and any $l\in\NN_{\gcd(c,d)}$, one has
\[\epsilon_{d, l, c} \epsilon_{b, k, a} = 0;\]
\item for any $a,b,c,d\in\NN_N$, $k\in\NN_{\gcd(a,b)}$, and $l\in\NN_{\gcd(c,d)}$, if $b \neq c$, one has
\[\alpha_{d, l, c} \alpha_{b, k, a} = \epsilon_{d, l, c} \alpha_{b, k, a} = \alpha_{d, l, c} \epsilon_{b, k, a} = 0;\]
\item for any $a,b,c\in\NN_N$, $k\in\NN_{\gcd(a,b)}$, and $l\in\NN_{\gcd(b,c)}$, one has
\begin{eqnarray}
\alpha_{c, l, b} \alpha_{b, k, a} &=& \frac{b}{\lcm(k, l)} \alpha_{c, \gcd(l, k), a};\nonumber\\
\epsilon_{c, l, b} \alpha_{b, k, a} &=& \frac b k \epsilon_{c, \gcd(l, k), a};\nonumber\\
\alpha_{c, l, b} \epsilon_{b, k, a} &=& \frac b l \epsilon_{c, \gcd(l, k), a}.\nonumber
\end{eqnarray}
\end{itemize}
\end{exm}

\begin{nul} The functor $\fromto{\NNhat}{\Cat_2}$ that carries any $N$ to $\FF_{\cop_N}$ is left Kan extended from $\NN$, and all the overcategories $\NN_N$ are filtered. Consequently, the induced functor $\fromto{\NNhat}{\Cat_2}$ given by $\goesto{N}{A^{\eff}(\FF_{\cop_N})}$ is left Kan extended from $\NN$, and in turn the assignment $\goesto{N}{\Sp_{\cop_N}}$ is a functor
\[\fromto{\NNhat^{\op}}{\Cat_{\infty}}\]
that is right Kan extended from $\NN$.

In particular, we have identifications
\[\Sp_{\cop}\simeq\lim_{N\in\NN^{\op}}\Sp_{\cop_N}\text{\quad and\quad}\Sp_{\cop_{p^{\infty}}}\simeq\lim_{l\in\NN_0^{\op}}\Sp_{\cop_{p^l}}.\]
\end{nul}

We now wish to relate the $\infty$-categories $\Sp_{\cop_N}$ of cyclonic spectra to better-known models of $T$-equivariant spectra. We will need to be precise about which ones.

\begin{cnstr} Write $T$ for the circle group, and suppose $\mathcal{F}$ a family of closed subgroups of $T$ that is closed under passage to subgroups. Let $E\mathcal{F}$ be a universal space for $\mathcal{F}$.
\begin{itemize}
\item We call a map $\fromto{X}{Y}$ of orthogonal $T$-spectra an \emph{$\mathcal{F}$-equivalence} if, for any $H\in\mathcal{F}$, the induced map
\[\fromto{\pi_{\star}^HX}{\pi_{\star}^HY}\]
is an isomorphism of graded abelian groups – or, equivalently, if the map
\[\fromto{E\mathcal{F}^{\mskip1mu+}\wedge X}{E\mathcal{F}^{\mskip1mu+}\wedge Y}\]
is a $\ul{\pi}_{\star}$-isomorphism.
\item We will say that an orthogonal $T$-spectrum $X$ is \emph{$\mathcal{F}$-acyclic} if $E\mathcal{F}^{\mskip1mu+}\wedge X\simeq 0$.
\item Lastly, we will say that an orthogonal $T$-spectrum $Y$ is \emph{$\mathcal{F}$-local} if every map from an $\mathcal{F}$-acyclic is nullhomotopic.
\end{itemize}
Denote by $\categ{OrthogSp}_T$ the underlying $\infty$-category of the stable model category of orthogonal $G$-spectra. The full subcategory of $\mathcal{F}$-local spectra is then a localization of $\categ{OrthogSp}_T$, which we shall denote $\categ{OrthoSp}_{T,\mathcal{F}}$. Further, if $\mathcal{F}^{\mskip1mu\prime}\subset\mathcal{F}$ is a subfamily, then of course the full subcategory
\[\categ{OrthoSp}_{T,\mathcal{F}^{\mskip1mu\prime}}\subset\categ{OrthoSp}_{T,\mathcal{F}}\]
is also a localization.

For any $N\in\NNhat$, we have the family $\mathcal{F}_N$ of finite subgroups of $T$ whose order divides $N$. Let us simply write $\categ{OrthogSp}_{T,N}$ for $\categ{OrthogSp}_{T,\mathcal{F}_N}$.

Since these localizations can all be modeled as left Bousfield localizations of model categories, it follows that the assignment $\goesto{N}{\categ{OrthoSp}_{T,N}}$ is a diagram $\fromto{\NNhat^{\op}}{\Cat_{\infty}}$. One sees immediately that these are each right Kan extended from $\NN^{\op}$.
\end{cnstr}

%\begin{nul} One may show that a $T$-spectrum is $\mathcal{F}_{\infty}$-local if and only if the geometric fixed point spectrum $\Phi^{T}(Y)$ vanishes. \marginnote{!! Does this deserve a proof? --- CB} For example, we learned from Mike Hill that for any commutative ring $A$, one has $\Phi^{T}\THH(A)\simeq 0$.
%\end{nul}

\begin{thm}\label{thm:cyclonesareTspectra} There is an equivalence of $\infty$-categories
\[\equivto{\Sp_{\cop_{\ast}}}{\categ{OrthoSp}_{\TT,\ast}}\]
of $\Fun(\NNhat^{\op},\Cat_{\infty})$.
\begin{proof} Since both source and target are right Kan extended from $\NN^{\op}$, it suffices to construct this equivalence for the restrictions of these diagrams to $\NN^{\op}$.

On the source, for each positive integer $N$, one has the compact generator $\bigvee_{m|N}S^{\langle m\rangle}$, and on the target, one has the compact generator $\bigvee_{m|N}\Sigma^{\infty}_+(T/C_m)$. These compact generators are compatible with the diagrams. Hence, in light of the Schwede--Shipley theorem \cite[Th. 7.1.2.1]{HA}, it suffices to construct a natural equivalence of spectra
\[\equivto{S^{\langle m\rangle}(\langle n\rangle)}{F(\Sigma^{\infty}_+(T/C_{m}),\Sigma^{\infty}_+(T/C_{n}))\simeq\Sigma^{\infty}_+(T/C_{m})\wedge_{\Sigma^{\infty}_+T}\Sigma^{\infty}_+(T/C_{n})}\]
for any positive integers $m$ and $n$. This now follows from Ex. \ref{exm:representableMack} and the Segal--tom Dieck splitting theorem.
\end{proof}
\end{thm}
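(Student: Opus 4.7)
The plan is to reduce to the case of finite $N\in\NN$ and then prove the equivalence by a tilting argument. Both diagrams $N\mapsto\Sp_{\cop_N}$ and $N\mapsto\categ{OrthoSp}_{T,N}$ are right Kan extensions from $\NN^{\op}\subset\NNhat^{\op}$: the source by the limit formula $\Sp_{\cop_N}\simeq\lim_{M\in\NN_N^{\op}}\Sp_{\cop_M}$ recorded just before the theorem, and the target by the observation that $\mathcal{F}_N=\bigcup_{M\in\NN_N}\mathcal{F}_M$ so that $\mathcal{F}_N$-localization is the limit of $\mathcal{F}_M$-localizations. Hence it suffices to construct a natural equivalence for $N$ ranging over $\NN$.

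For each such $N$, I would exhibit compatible compact generators on both sides: on the cyclonic side, the corepresentable Mackey functors $S^{\langle m\rangle}$ for $m\mid N$, which are compact and collectively generate $\Sp_{\cop_N}$ since the Burnside construction produces an $\infty$-category whose Yoneda image is a set of compact generators of the Mackey functor category; on the equivariant side, the orbit suspension spectra $\Sigma^\infty_+(T/C_m)$ for $m\mid N$, which are the standard compact generators of $\categ{OrthoSp}_{T,N}$. Both families are compatible with the functorialities in $N$ (induction up the divisibility lattice corresponds to $I_M^N$), so naturality in $N$ will be automatic once the equivalence is produced for each fixed $N$. By the Schwede--Shipley theorem \cite[Th.~7.1.2.1]{HA}, the desired equivalence then reduces to the construction of a natural equivalence of spectrally enriched endomorphism categories on these generators.

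Concretely, it therefore suffices to produce a natural equivalence of spectra
\[S^{\langle m\rangle}(\langle n\rangle_N)\simeq F\bigl(\Sigma^\infty_+(T/C_m),\Sigma^\infty_+(T/C_n)\bigr)\]
for positive integers $m,n\mid N$. On the cyclonic side, Example~\ref{exm:representableMack} directly computes the left hand side as the wedge
\[\bigvee_{l\in\NN_{\gcd(m,n)}}\Sigma^\infty_+ B\!\left(\tfrac{1}{\lcm(m,n)}\ZZ\right),\]
that is, as a wedge of copies of $\Sigma^\infty_+ S^1$ indexed by divisors of $\gcd(m,n)$. On the equivariant side, Wirthm\"uller duality identifies the internal function spectrum with a smash product $\Sigma^\infty_+(T/C_m)\wedge_{\Sigma^\infty_+ T}\Sigma^\infty_+(T/C_n)$, and the Segal--tom Dieck splitting then decomposes this as a wedge over conjugacy classes of finite subgroups of $T$ stabilizing a point of $T/C_m\times T/C_n$, that is, over $\NN_{\gcd(m,n)}$; the summand for $l\mid\gcd(m,n)$ is the suspension spectrum of the classifying space of the Weyl quotient, which is precisely $\Sigma^\infty_+ B(\tfrac{1}{\lcm(m,n)}\ZZ)$.

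The main obstacle will be coherence rather than the summand-by-summand identification: one must check that the matching of wedge summands is compatible with composition in both enriched categories, and also varies naturally as $N$ grows along divisibility. For this, I would package both sides as spectral presheaves on a common indexing $2$-category (the skeleton of the full subcategory on the generators) and argue that both presheaves are determined, via the $E_\infty$-structure and the computation of $\pi_*$, by the same combinatorial datum of the double-coset lattice of finite subgroups of $T$ — essentially the divisibility data built into $A^{\eff}(\FF_{\cop_N})$ by construction. Once this is arranged, the naturality in $N$ follows from the compatibility of induction with both types of generators, and the theorem is proven.
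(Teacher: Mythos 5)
Your argument follows the paper's proof essentially step for step: reduce to $\NN^{\op}$ by right Kan extension, match the compact generators $\bigvee_{m\mid N}S^{\langle m\rangle}$ and $\bigvee_{m\mid N}\Sigma^\infty_+(T/C_m)$, invoke Schwede--Shipley, and compute the mapping spectra via Example~\ref{exm:representableMack} together with the Segal--tom Dieck splitting. The coherence concern you raise in the final paragraph --- that the summand-by-summand identification must be promoted to an equivalence of spectrally enriched endomorphism categories, compatibly with composition and with induction along the divisibility lattice --- is a real point that the paper's proof also leaves implicit rather than addressing, so flagging it is sensible even though your sketch for resolving it is not fully worked out.
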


\begin{nul} Write $\categ{SymmSp}$ for the stable model category of symmetric spectra in simplicial sets, and let $N\in\widehat{\NN}$. We regard $A^{\eff}(\FF_{\cop_N})$ as a simplicial category. Now we may endow the category $\Fun(A^{\eff}(\FF_{\cop_N}),\categ{SymmSp})$ of simplicial functors
\[\fromto{A^{\eff}(\FF_{\cop_N})}{\categ{SymmSp}}\]
with its projective model structure, which is a left proper, combinatorial model category. It follows immediately from Th. \ref{thm:cyclonesareTspectra} that there is a zigzag of Quillen equivalences between the model category of orthogonal $T$-spectra relative to the family $\mathcal{F}_N$ and the left Bousfield localization of $\Fun(A^{\eff}(\FF_{\cop_N}),\categ{SymmSp})$ at the set of maps
\[H\coloneq\{\fromto{\Sigma_+^{\infty}\circ h_{X}\vee\Sigma_+^{\infty}\circ h_{X}}{\Sigma_+^{\infty}\circ h_{X\oplus Y}}\ |\ X,Y\in A^{\eff}(\FF_{\cop_N})\}.\]
\end{nul}

\begin{ntn} Let us write $DA(\FF_{\cop})$ for the nonabelian derived $\infty$-category of $A^{\eff}(\FF_{\cop})$. This is the $\infty$-category of functors $\fromto{A^{\eff}(\FF_{\cop})^{\op}}{\Top}$ that preserve products. It has the formal property that for any $\infty$-category $D$ that admits all colimits, the Yoneda embedding induces an equivalence
\[\equivto{\Fun^L(DA(\FF_{\cop}),D)}{\Fun^{\sqcup}(A^{\eff}(\FF_{\cop}),D)},\]
where $\Fun^{\sqcup}$ is the $\infty$-category of functors that preserve finite coproducts.
\end{ntn}

\begin{prp}\label{prp:cyclonicistensor} Suppose $N\in\NNhat$. For any presentable additive $\infty$-category $\AA$, one has an equivalence
\[\AA_{\cop_N}\simeq DA(\FF_{\cop_N})\otimes\AA.\]
Consequently, if $\AA$ is stable, then one has
\[\AA_{\cop_N}\simeq\Sp_{\cop_N}\otimes\AA.\]
\begin{proof} If $\AA$ is presentable and additive, then in light of the duality equivalence
\[D\colon A^{\eff}(\FF_{\cop_N})^{\op}\simeq A^{\eff}(\FF_{\cop_N}),\]
we have
\begin{eqnarray}
DA(\FF_{\cop_N})\otimes\AA&\simeq&\Fun^R(DA(\FF_{\cop})^{\op},\AA)\nonumber\\
&\simeq&\Fun^L(DA(\FF_{\cop_N}),\AA^{\op})^{\op}\nonumber\\
&\simeq&\Fun^{\oplus}(A^{\eff}(\FF_{\cop_N}),\AA^{\op})^{\op}\nonumber\\
&\simeq&\Fun^{\oplus}(A^{\eff}(\FF_{\cop_N})^{\op},\AA)\nonumber\\
&\simeq&\Fun^{\oplus}(A^{\eff}(\FF_{\cop_N}),\AA)\simeq\AA_{\cop_N}.\nonumber
\end{eqnarray}

If $\AA$ is moreover stable, then we can stabilize $DA(\FF_{\cop_N})$: the natural functor
\[\fromto{DA(\FF_{\cop})}{\Sp(DA(\FF_{\cop}))\simeq\Sp\otimes DA(\FF_{\cop})}\]
induces an equivalence
\[\equivto{\Fun^L(\Sp\otimes DA(\FF_{\cop}),\AA)}{\AA_{\cop}}.\]
Now
\[DA(\FF_{\cop_N})\otimes\Sp\simeq\Sp_{\cop_N},\]
and consequently, if $\AA$ is stable and admits all colimits, then
\[\Fun^L(\Sp_{\cop},\AA)\simeq\AA_{\cop},\]
whence (using duality again)
\[\AA_{\cop}\simeq((\AA^{\op})_{\cop})^{\op}\simeq\Fun^L(\Sp_{\cop},\AA^{\op})^{\op}\simeq\Fun^R(\Sp_{\cop}^{\op},\AA)\simeq\Sp_{\cop}\otimes\AA,\]
as desired.
\end{proof}
\end{prp}

\begin{nul} Fix, for the remainder of this section, $N\in\NNhat$. We can define a symmetric monoidal structure on $N$-cyclonic spectra via the second author's Day convolution, but in order to do so, we need to take a little care. The $2$-category $\FF_{\cop_N}$ does not admit products, so the effective Burnside $2$-category $A^{\eff}(\FF_{\cop_N})$ only admits the structure of a symmetric promonoidal $\infty$-category, denoted $A^{\eff}(\FF_{\cop_N})^{\circledast}$ in \cite{mack2}. Nevertheless, as we explain in \cite{mack2}, we are still entitled to employ the second author's Day convolution to get a symmetric monoidal structure on $\Sp_{\cop_N}$.
\end{nul}

\begin{dfn} Suppose $\AA^{\otimes}$ a presentable symmetric monoidal $\infty$-category (in the sense of \cite[Df. 3.4.4.1]{HA}, so that the symmetric monoidal structure preserves all colimits separately in each variable). Then the localized Day convolution yields the symmetric monoidal $\infty$-category
\[\AA_{\cop_N}^{\otimes}\coloneq\Mack(\FF_{\cop_N},\AA)^{\otimes}.\]
When $\AA^{\otimes}=\Sp^{\wedge}$, we call this the \emph{smash product} symmetric monoidal structure on cyclonic spectra.
\end{dfn}

\begin{nul} In the situation above, the tensor product functor on $\AA_{\cop_N}$ preserves colimits separately in each variable. So $\AA_{\cop_N}^{\otimes}$ is a presentable symmetric monoidal $\infty$-category. When $\AA=\Sp$, one finds that the homotopy category $h\Sp_{\cop_N}$ is naturally a triangulated tensor category, as conjectured by Kaledin. 
\end{nul}

\begin{exm} Thanks to our Barratt--Priddy--Quillen theorem \cite{mack2}, we also have an expression for the unit of the smash product symmetric monoidal structure. It is the Mackey functor
\[\SS\colon\fromto{A^{\eff}(\FF_{\cop_N})}{\Sp}\]
that carries a $C_N$-orbit $\angs{m}_N$ to the direct sum $K$-theory of
\[(\FF_{\cop_N})_{/\angs{m}_N}\simeq(\FF_{C_N})_{/\angs{m}_N};\]
this $K$-theory of course is nothing more than the wedge
\[\SS\angs{m}\simeq\bigvee_{n\in\NN_m}\Sigma^{\infty}_+B\Aut_{(\OO_{C})_{/\angs{m}}}(\angs{n})\simeq\bigvee_{n\in\NN_m}\Sigma^{\infty}_+B\left(\frac{1}{m}\ZZ\Big/\frac{1}{n}\ZZ\right).\]

On $\pi_0$, we obtain the unit in $\Ab_{\cop_N}$: it is the functor $\Omega\colon\fromto{A^{\eff}(\FF_{\cop_N})}{\Ab}$ that carries $\angs{m}_N$ to the Burnside ring $\Omega\angs{m}_N$ of finite $\frac{1}{m}\ZZ/\ZZ$-sets. Thus $\Omega\angs{m}\cong\ZZ\{\NN_m\}$, and in fact, one obtains a unique isomorphism
\[\Omega\cong\WW(\ZZ)\]
in $\categ{CAlg}(\Ab_{\cop_N}^{\otimes})$. This fact was first observed (and generalized) by Dress and Siebeneicher \cite{MR947758}.
\end{exm}

%-------------------------------------------------------------------%

\section{Cyclotomic spectra} In this section, let us fix an supernatural number $N\in\NNhat$. We define $N_d$ to be the divisor of $N$ with the property that
\[v_p(N_d)\coloneq\begin{cases}
\infty&\text{if }v_p(N)=\infty;\\
0&\text{otherwise.}
\end{cases}\]
This corresponds to the divisible part of the abelian group $\frac{1}{N}\ZZ$. Accordingly, if $N$ is finite, then $N_d=1$, and the story of this section becomes uninteresting.

\begin{dfn} \label{dfn:mu_n} Now suppose $n$ a natural number that divides $N_d$. We define an endofunctor $\iota_n$ of $\OO_{\cop_N}$ as follows. On objects and $1$-morphisms, $\iota_n$ is given by pullback of the $C_N$-action along the multiplication-by-$n$ map $C_N\to C_N$, so that $\iota_n (\angs{m}_N)=\angs{mn}_N$. On $2$-isomorphisms, $\iota_n$ is the assignment $\goesto{r}{r/n}$. We note that $\iota_n$ is (strictly) fully faithful, and it identifies $\OO_{\cop_N}$ with the full subcategory of itself spanned by those objects $\angs{s}_N$ such that $n$ divides $s$.
\end{dfn}

\begin{lem} The functor $\iota_n$ extends to an essentially unique fully faithful functor 
\[\iota_n\colon\fromto{\FF_{\cop_N}}{\FF_{\cop_N}}\]
that preserves coproducts. This functor preserves pullbacks. Additionally, this functor admits a right adjoint $p_n\colon\fromto{\FF_{\cop_N}}{\FF_{\cop_N}}$ given by
\[p_n(\angs{m}_N) = \begin{cases} 
\angs{m/n}_N & \text{if }n\text{ divides }m \\
\emptyset & \text{otherwise.} \end{cases}
\]
In particular, $\iota_n$ and $p_n$ together exhibit $\OO_{\cop_N}$ as a localization of itself. In the language of \cite{BDGNS}, the functor $\iota_n$ is both \emph{left orbital} and \emph{right orbital}.
\begin{proof} If $n$ divides $s$ and $t$, then of course it divides their greatest common divisor. Consequently, the essential image of the fully faithful functor $\iota_n\colon \FF_{\cop_N} \to \FF_{\cop_N}$ is closed under pullbacks.

To see that the right adjoint $p_n$ exists, we observe that if it so happens that the pullback functor on presheaf categories
\[\iota_n^{\star} \colon \Fun(\OO_{\cop_N}^{\op},\Top) \to \Fun(\OO_{\cop_N}^{\op},\Top)\]
preserves the full subcategory $\FF_{\cop_N}$, then the desired right adjoint is $\iota_n^{\star}|_{\FF_{\cop_N}}$. To verify that this occurs, we must check that $\iota_n^{\star}$ carries representables to coproducts of representables. But 
\[\iota_n^{\star} \Map(-, \angs{m}_N) = \begin{cases} 
\Map(-, \angs{m/n}_N) & \text{if }n\text{ divides }m; \\
\emptyset & \text{otherwise.} \end{cases}\]
Hence we have our desired right adjoint.
\end{proof}
\end{lem}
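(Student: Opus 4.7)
First I would establish the extension of $\iota_n$ to $\FF_{\cop_N}$ using the coproduct-completion structure. Since $\FF_{\cop_N}$ is the closure of $\OO_{\cop_N}$ under finite coproducts, any functor out of $\OO_{\cop_N}$ with a coproduct-cocomplete target extends essentially uniquely to a coproduct-preserving functor on $\FF_{\cop_N}$; concretely, $\iota_n$ sends $\bigsqcup_i \angs{m_i}_N$ to $\bigsqcup_i \angs{m_i n}_N$, acting on intertwiners orbit-wise via $r \mapsto r/n$. Full faithfulness is inherited from the orbit case using the fact that for any cyclonic sets $X = \bigsqcup_i X_i$ and $Y = \bigsqcup_j Y_j$ with each $X_i, Y_j$ an orbit, there is a natural decomposition
\[\Map_{\FF_{\cop_N}}(X,Y) \simeq \prod_i \coprod_j \Map_{\FF_{\cop_N}}(X_i, Y_j)\]
(the inner coproduct uses connectedness of the orbit $X_i$), and the same decomposition after applying $\iota_n$ reduces the claim to full faithfulness of $\iota_n$ on $\OO_{\cop_N}$, already noted in Definition \ref{dfn:mu_n}. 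The essential image then consists of those cyclonic sets whose orbits all have stabilizer-index divisible by $n$.

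For preservation of pullbacks, the argument of Proposition \ref{cycmulpul} reduces each multipullback diagram in $\FF_{\cop_N}$ to a disjoint union of pullbacks of orbits computable in $\FF_{C_N}$, whose components are orbits of the form $\angs{\gcd(s,t)}_N$; since $n \mid s$ and $n \mid t$ force $n \mid \gcd(s,t)$, the essential image of $\iota_n$ is closed under pullbacks, and $\iota_n$ therefore preserves them. To construct the right adjoint, I would follow the presheaf-theoretic route outlined in the excerpt. The restriction functor
\[\iota_n^{\star} \colon \Fun(\OO_{\cop_N}^{\op}, \Top) \to \Fun(\OO_{\cop_N}^{\op}, \Top)\]
preserves both limits and colimits (it sits between two Kan extensions), so it suffices to check that it sends each representable to either a representable or the empty presheaf. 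The crucial computation is
\[(\iota_n^{\star} \Map(-, \angs{m}_N))(\angs{k}_N) = \Map_{\FF_{\cop_N}}(\angs{kn}_N, \angs{m}_N),\]
which is nonempty precisely when $kn$ divides $m$---equivalently, when $n \mid m$ and $k \mid m/n$---and in that case the rescaling $r \mapsto r/n$ built into $\iota_n$ exactly matches the reindexing of intertwiners modulo $\frac{1}{m}\ZZ$ as intertwiners modulo $\frac{1}{m/n}\ZZ$. This identifies $\iota_n^{\star} \Map(-, \angs{m}_N)$ with $\Map(-, \angs{m/n}_N)$ when $n \mid m$ and with the empty presheaf otherwise, yielding the stated formula for $p_n$ on orbits; the formula propagates to all of $\FF_{\cop_N}$ because $\iota_n^{\star}$ preserves coproducts and Yoneda embeds $\FF_{\cop_N}$ as the coproducts of representables.

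The localization assertion then comes for free: since $\iota_n$ is fully faithful with right adjoint $p_n$, the unit $\mathrm{id} \to p_n \iota_n$ is an equivalence, $\iota_n$ realizes its essential image as a reflective sub-$\infty$-category, and the composite $\iota_n p_n$ is an idempotent endofunctor exhibiting $\FF_{\cop_N}$ (and by restriction $\OO_{\cop_N}$) as a localization of itself. I expect the principal obstacle to be careful bookkeeping at the level of $2$-morphisms: in particular, verifying that the rescaling $r \mapsto r/n$ interacts coherently both with the orbit-wise decomposition of mapping spaces (for full faithfulness) and with the reindexing $\frac{1}{m}\ZZ \to \frac{1}{m/n}\ZZ$ (for the adjunction equivalence). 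Once this $2$-categorical bookkeeping is handled, every remaining assertion reduces, thanks to the disjunctive structure of $\FF_{\cop_N}$, to elementary combinatorics of divisibility among stabilizer orders.
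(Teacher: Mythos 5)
Your proof follows essentially the same two-step route as the paper: closure of the essential image under pullbacks via the gcd observation, and construction of $p_n$ by checking that the presheaf-level restriction $\iota_n^{\star}$ carries representables to coproducts of representables and hence restricts to $\FF_{\cop_N}$. One small terminological slip in your final paragraph: since $\iota_n$ is the fully faithful \emph{left} adjoint of the pair, its essential image is a \emph{coreflective} subcategory of $\FF_{\cop_N}$ rather than a reflective one (the inclusion admits a \emph{right} adjoint, namely $p_n$), so the composite $\iota_n p_n$ is a colocalization; the substance of what you write there is otherwise correct.
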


\begin{ntn} In light of the previous result, if $n\in\NN_{N_d}$, then we find that both $\iota_n$ and $p_n$ each induce direct-sum-preserving endofunctors -- which we denote $A^{\eff}(\iota_n)$ and $A^{\eff}(p_n)$ -- on the effective Burnside $\infty$-category $A^{\eff}(\FF_{\cop_N})$. It is not true that these are adjoint to each other, but it is the case that one has a natural equivalence $A^{\eff}(p_n)A^{\eff}(\iota_n)\simeq\id$.
\end{ntn}

\begin{ntn} Suppose $N\in\NNhat$ and $n\in\NN_{N_d}$. For any presentable $\infty$-category $\AA$, the functor $A^{\eff}(p_n)$ induces a restriction endofunctor
\[i_{n,\star}\coloneq A^{\eff}(p_n)^{\star}\colon\fromto{\AA_{\cop_N}}{\AA_{\cop_N}}\]
that commutes with both limits and colimits. Consequently, it admits a left adjoint $i_n^{\star}$, which is given by left Kan extension along $A^{\eff}(p_n)$. (Recall \cite[Lemma 2.20]{aroneching} that the Kan extension of an additive functor between semiadditive categories along an additive functor between semiadditive categories is always additive.) It also admits a right adjoint $i_n^!$, given by right Kan extension along $A^{\eff}(p_n)$, for which we will have less need. (All of this notation is chosen to suggest an analogy with the pushforward of sheaves along a closed immersion.)

When $\AA=\Sp$, the functors $i_n^{\star}$ correspond under the equivalence of Th. \ref{thm:cyclonesareTspectra} to the geometric fixed point functors for $C_n\subset T$; indeed, they preserve colimits, so it suffices to check this on corepresentables (i.e., on sphere spectra), where it is obvious.
\end{ntn}

\begin{cnstr} Suppose now that $p\in\NN_{N_d}$ is prime. For any supernatural number $M$, let us write $M(p')$ for the prime-to-$p$ part of $M$; i.e., $M(p')$ is the supernatural number with
\[v_q(M(p'))=\begin{cases}
v_q(M)&\text{if }q\neq p;\\
0&\text{if }q=p.
\end{cases}
\]
Now let $j_p\colon\fromto{\OO_{\cop_{N(p')}}}{\OO_{\cop_N}}$ be the complementary sieve to the cosieve $i_p$; that is, $j_p$ is the inclusion of the full subcategory spanned by those $\angs{m}_N$ such that $p$ does not divide $m$. As a sieve, this functor extends to a pullback-preserving functor $\fromto{\FF_{\cop_{N(p')}}}{\FF_{\cop_N}}$, whence we obtain a fully faithful functor
\[A^{\eff}(j_p)\colon A^{\eff}(\FF_{\cop_{N(p')}})\to A^{\eff}(\FF_{\cop_N}).\]

Now if $\AA$ is an additive presentable $\infty$-category, then we may thus contemplate the restriction functor
\[j_p^{\star}\colon\fromto{\AA_{\cop_N}}{\AA_{\cop_{N(p')}}}\]
and its fully faithful left and light adjoints
\[j_{p,!}\text{\quad and\quad}j_{p,\star}\colon\fromto{\AA_{\cop_{N(p')}}}{\AA_{\cop_N}}\]
along $A^{\eff}(j_p)$. The left adjoint may be viewed as the ``$p$-constant cyclonic object'' functor; that is, 
\begin{itemize}
\item For any $n\in\NN_N$ and any $X \in \AA_{\cop_{N(p')}}$,
\[j_{p,!}X\angs{n}_N\coloneq X\angs{n(p')}_{N(p')};\]
\item For any $m,n \in\NN_N$ such that $m$ divides $n$ and for any $X \in \AA_{\cop_{N(p')}}$, the morphism
\[\phi_{m | n, \star}\colon j_{p,!}X\angs{m}_N \to j_{p,!}X \angs{n}_N \]
is the identity if $n / m$ is a power of $p$, and is $\phi_{m | n, \star}\colon\fromto{X\angs{m}_N}{X\angs{n}_X}$ if $n/m$ is coprime to $p$;
\item For any $m, n \in\NN_N$ such that $m$ divides $n$ and for any $X \in \AA_{\cop_{N(p')}}$,
\[\phi_{m |n }^\star\colon j_{p,!}X \angs{n}_N \to j_{p,!}X \angs{m}_N \]
is multiplication by $n / m$ if $n/m$ is a power of $p$, and is $\phi_{m | n}^\star$ if $n/m$ is prime to $p$.
\end{itemize}
\end{cnstr}

\begin{prp} Suppose $\AA$ a presentable stable $\infty$-category, and suppose $p\in\NN_{N_d}$ a prime. Then the two fully faithful functors
\[i_{p,\star}\colon\into{\AA_{\cop_N}}{\AA_{\cop_N}}\text{\quad and\quad}j_{p,!}\colon\into{\AA_{\cop_{N(p')}}}{\AA_{\cop_N}}\]
together exhibit $\AA_{\cop_N}^{\op}$ as a recollement of $\AA_{\cop_N}^{\op}$ and $\AA_{\cop_{N(p')}}^{\op}$ \cite[Df. A.8.1]{HA}; equivalently, they give a stratification of $\AA_{\cop_N}^\op$ along $\Delta^1$ \cite[Definition 3.4]{aroneching}.
\begin{proof} This follows immediately from \cite[Proposition 3.13]{aroneching}.
\end{proof}
\end{prp}

\begin{nul}\label{nul:normcofseq} In particular \cite[Theorem 2.32]{aroneching}, we have a cofiber sequence
\[j_{p,!}j_p^*X\to X\to i_{p,\star}i_p^{\star}X.\]
\end{nul}

\begin{nul} Now the product of any two elements of $\NN_{N_d}$ again lies in $\NN_{N_d}$. Hence we may consider the multiplicative monoid $\NN_{N_d}$, and the endofunctors $\iota_n$ fit together into a functor
\[I\colon B\NN_{N_d} \to \Cat_\iy.\]
To see this, it's best to construct the cartesian fibration $p \colon \OO_{\cop_N}^{\para} \to B\NN_{N_d}$ classified by $I$.
\end{nul}

\begin{dfn} Let $\OO_{\cop_N}^{\para}$ be the $2$-category in which
\begin{itemize}
\item the objects are those of $\OO_{\cop_N}$;
\item a $1$-morphism from $S$ to $T$ is a pair $(m,f)$ consisting of an element $m \in \NN_{N_d}$ and a morphism $f \colon \iota_m(S) \to T$ in $\OO_{\cop_N}$, with composition law
\[(n, g) \circ (m, f) = (nm,  g \iota_n(f))\]
using the natural isomorphism $ \iota_n \iota_m \cong  \iota_{nm}$;
\item between two $1$-morphisms $(m,f\colon \iota_m(S) \to T)$ and $(n,g\colon \iota_n(S) \to T)$ the morphism is given by
\[\Mor_{\Mor_{\OO_{\cop_N}^{\para}}(S,T)}((m, f), (n, g))\coloneq\begin{cases}
\Mor_{\Mor_{\OO_{\cop_N}}( \iota_m(S),T)}(f, g)&\text{if }m=n\\
\varnothing&\text{if }m\neq n;
\end{cases}\]
\item the composition functor
\[\Mor((m , f_1), (m, f_2)) \X \Mor((n, g_1), (n, g_2)) \to \Mor((nm,  g_1 \iota_n(f_1)), (nm,  g_2 \iota_n(f_2)))\]
is given by
\[\goesto{(r, s)}{\frac{r}{m}  + s}.\]
\end{itemize}
\end{dfn}

We would like to demonstrate that $p\colon \OO_{\cop_N}^{\para}\to B\NN_{N_d}$ is a cocartesian fibration. To this end, we must digress in order to make some technical observations about the lifting problems in $2$-categories.

By construction, any $2$-category, when regarded as an $\infty$-category using Cnstr. \ref{cnstr:NCDelta} is a $3$-coskeletal simplicial set as well as a $2$-category in the sense of \cite[\S 2.3.4]{HTT}. That implies that if $f\colon\fromto{\CC}{\DD}$ is a functor between two $2$-categories, then for any solid arrow diagram
\begin{equation*}
\begin{tikzpicture}
\matrix(m)[matrix of math nodes,
row sep=4ex, column sep=4ex,
text height=1.5ex, text depth=0.25ex]
{\Lambda^n_k & \CC \\
\Delta^n & \DD, \\ };
\path[>=stealth,->,font=\scriptsize]
(m-1-1) edge node[above]{$\eta$} (m-1-2)
edge[right hook->] (m-2-1)
(m-1-2) edge (m-2-2)
(m-2-1) edge node[below]{$\theta$} (m-2-2)
edge[dotted] (m-1-2);
\end{tikzpicture}
\end{equation*}
a dotted lift exists automatically either if $n\geq 5$ or if $n\geq 3$ and $1\leq k\leq n-1$ \cite[Pr. 2.3.4.7]{HTT}. We therefore undwind this finite check to give the following criterion.

\begin{lem}\label{lem:21functorisinnerfib} A functor $f\colon\fromto{\CC}{\DD}$ between two $2$-categories is an inner fibration if and only if, for any $2$-isomorphism $\lambda\colon\equivto{\psi}{\chi}$ of $\DD$ and for any lift $\upsilon$ of $\psi$ to $\CC$, there exists a $2$-isomorphism $\kappa\colon\equivto{\upsilon}{\phi}$ of $\CC$ lifting $\lambda$.
\end{lem}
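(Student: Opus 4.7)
My plan is to reduce the inner-fibration condition to the single lifting problem for $\Lambda^2_1 \hookrightarrow \Delta^2$, and then show that this precise lifting problem is what the lemma encodes. Since both $N(\CC_\D)$ and $N(\DD_\D)$ are $3$-coskeletal and are $2$-categories in the sense of \cite[\S 2.3.4]{HTT}, the discussion preceding the statement furnishes automatic dotted lifts against $\Lambda^n_k\hookrightarrow\Delta^n$ in all cases \emph{except} $n=2$, $k=1$. So the content of the lemma is that this last case is equivalent to the stated intertwiner-lifting property.

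Next I would unwind what a $2$-simplex in $N(\CC_\D)$ looks like using Cnstr.~\ref{cnstr:NCDelta}: a triple of $1$-morphisms $\phi_{01},\phi_{12},\phi_{02}$ together with a $2$-isomorphism $\alpha\colon\phi_{02}\simeq\phi_{12}\circ\phi_{01}$. A horn $\Lambda^2_1\to\CC$ is therefore a composable pair $(\phi_{01},\phi_{12})$, and a lifting problem against $f$ is the data of a $2$-simplex $\theta\colon\Delta^2\to\DD$ with $\tilde\phi_{01}=f(\phi_{01})$, $\tilde\phi_{12}=f(\phi_{12})$, together with a third edge $\tilde\phi_{02}$ and filler $\tilde\alpha\colon\tilde\phi_{02}\simeq f(\phi_{12})\circ f(\phi_{01})$; a solution is a pair $(\phi_{02},\alpha)$ in $\CC$ such that $f(\phi_{02})=\tilde\phi_{02}$ and $f(\alpha)=\tilde\alpha$.

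For the forward direction I would take $\lambda\colon\psi\simeq\chi$ in $\DD$ and a lift $\upsilon$ of $\psi$, then contemplate the degenerate $2$-simplex of $\DD$ with edges $(\psi,\id_Y,\chi)$ and filler $\lambda^{-1}\colon\chi\simeq\id_Y\circ\psi$. The horn $(\upsilon,\id_{\bar Y})$ in $\CC$ lifts its boundary, and inner-fibration filling produces $\phi\colon\bar X\to\bar Y$ lifting $\chi$ together with $\kappa^{-1}\colon\phi\simeq\upsilon$ lifting $\lambda^{-1}$; taking inverses gives the desired $\kappa$. For the converse, given $\Lambda^2_1$-data as above I would set $\psi\coloneq f(\phi_{12}\circ\phi_{01})$, $\chi\coloneq\tilde\phi_{02}$, $\lambda\coloneq\tilde\alpha^{-1}$, and $\upsilon\coloneq\phi_{12}\circ\phi_{01}$; applying the stated condition yields $\phi$ and $\kappa\colon\upsilon\simeq\phi$ lifting $\lambda$, so $\phi_{02}\coloneq\phi$ and $\alpha\coloneq\kappa^{-1}$ solve the original lifting problem.

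The main obstacle is just careful bookkeeping: the only conceptual move is the degenerate-$2$-simplex trick that isolates pure $2$-isomorphism lifting from general horn filling. I do not expect any genuine difficulty beyond sign conventions on the inversion of $2$-isomorphisms.
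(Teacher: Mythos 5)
The paper states this lemma without proof, presenting it as the unwinding of the finite check left after invoking \cite[Pr.~2.3.4.7]{HTT}; your argument carries out exactly that unwinding, correctly isolating the $\Lambda^2_1 \hookrightarrow \Delta^2$ case and translating it back and forth with the intertwiner-lifting condition. Both directions are correct (the "degenerate" $2$-simplex in the forward step is not literally degenerate, only one of its edges is an identity, but the construction works as written), and this is clearly the intended argument.
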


\noindent The condition to be a cocartesian fibration is a bit more involved, but it's no less explicit.

\begin{lem}\label{lem:21functoriscocartesian} Suppose $f\colon\fromto{\CC}{\DD}$ an inner fibration. Then a $1$-morphism $\phi$ of $\CC$ is $f$-cocartesian if and only if, for any $2$-isomorphism $\lambda\colon\equivto{\alpha\circ f(\phi)}{\beta}$ and any $1$-morphism $\chi$ covering $\beta$, there exist a $1$-morphism $\psi$ covering $\alpha$ and a $2$-isomorphism $\kappa\colon\equivto{\psi\circ\phi}{\chi}$ covering $\lambda$ such that for any $1$-morphism $\psi'$ covering $\alpha$ and any $2$-isomorphism $\kappa'\colon\equivto{\psi'\circ\phi}{\chi}$ covering $\lambda$, there exists a unique $2$-isomorphism $\zeta\colon\equivto{\psi}{\psi'}$ covering $\id_{\alpha}$ such that $\kappa=\kappa'\circ(\zeta\ast\phi)$.
\end{lem}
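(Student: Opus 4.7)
The plan is to unwind the definition of $f$-cocartesian—existence of unique extensions of every outer horn $\Lambda^n_0 \to \CC$ (with $\Delta^{\{0,1\}}$-edge $\phi$) along $\Lambda^n_0 \hookrightarrow \Delta^n$, compatible with any prescribed $\Delta^n \to \DD$ extension, for all $n \geq 2$—into the explicit $2$-categorical data of $N(\CC_\Delta)$ and $N(\DD_\Delta)$ provided by Cnstr.~\ref{cnstr:NCDelta}. Because $\CC$ and $\DD$ are $3$-coskeletal, lifts at $n \geq 5$ exist uniquely and tautologically ($\Lambda^n_0$ and $\Delta^n$ agree after $\operatorname{sk}_3$). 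At $n = 4$, the only $3$-face missing from $\Lambda^4_0$ is $\Delta^{\{1,2,3,4\}}$, whose entire $2$-skeleton upstairs is fixed by the four surrounding $3$-faces; its cocycle follows from the four cocycles already in force upstairs by the pentagonal coherence of $2$-categories, and it covers the prescribed downstairs $3$-simplex via Lem.~\ref{lem:21functorisinnerfib} applied to the inner horn $\Lambda^3_1 \subset \Delta^{\{1,2,3,4\}}$. Hence the cocartesian test reduces to $n \in \{2, 3\}$.

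At $n = 2$: a map $\Lambda^2_0 \to \CC$ extending $\phi$ is the datum of a $1$-morphism $\chi$ sharing the source of $\phi$; an extension $\Delta^2 \to \DD$ adds a downstairs $1$-morphism $\alpha$ together with a $2$-isomorphism $\lambda \colon \alpha \circ f(\phi) \simeq f(\chi)$; and an extension $\Delta^2 \to \CC$ is exactly a pair $(\psi, \kappa)$ with $f(\psi) = \alpha$ and $\kappa \colon \psi \circ \phi \simeq \chi$ covering $\lambda$. Setting $\beta \coloneq f(\chi)$, this matches the existence clause of the lemma.

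At $n = 3$: after using the established $n = 2$ existence to adjust any general outer horn to the case $X_2 = X_3 = \operatorname{tgt}(\chi)$ with $\phi_{23} = \operatorname{id}$ and $\alpha_{023} = \operatorname{id}_\chi$, the remaining data amount to two fillings $(\psi, \kappa)$ and $(\psi', \kappa')$ of the same $n = 2$ problem together with a downstairs $2$-isomorphism $f(\psi) \simeq f(\psi')$ that we normalize to $\operatorname{id}_\alpha$. A lift $\Delta^3 \to \CC$ is then exactly a $2$-isomorphism $\zeta \colon \psi \simeq \psi'$ covering $\operatorname{id}_\alpha$; and substituting $\alpha_{012} = \kappa^{-1}$, $\alpha_{013} = (\kappa')^{-1}$, $\alpha_{023} = \operatorname{id}_\chi$, and $\alpha_{123} = \zeta^{-1}$ into the single cocycle
\[\alpha_{012} \circ \alpha_{023} = \alpha_{123} \circ \alpha_{013}\]
from Cnstr.~\ref{cnstr:NCDelta}, and unwinding the whiskering using $\phi_{23} = \operatorname{id}$, collapses the relation to $\kappa = \kappa' \circ (\zeta \ast \phi)$. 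Existence and uniqueness of the $n = 3$ lift is thus existence and uniqueness of $\zeta$, which is the second clause of the lemma.

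The main technical obstacle is the $n = 3$ bookkeeping: verifying that the normalization to $\alpha_{023} = \operatorname{id}_\chi$ really captures the general outer horn (by absorbing the general comparison $2$-isomorphism into a re-choice of $(\psi',\kappa')$ via the $n = 2$ lift) and carrying out the $2$-isomorphism algebra so that the cocycle unwinds cleanly to $\kappa = \kappa' \circ (\zeta \ast \phi)$ with the correct orientations. The pentagonal $n = 4$ coherence is mechanical but deserves to be spelled out for completeness.
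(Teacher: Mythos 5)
The paper gives no proof of this lemma; after \cite[Pr.~2.3.4.7]{HTT} reduces the cocartesian lifting problem to a finite check of outer horns, the lemma is simply stated as the result of unwinding. Your plan — unwind at $n = 2, 3, 4$ with $n \geq 5$ automatic by $3$-coskeletality — is therefore exactly the intended approach, and your $n = 2$ analysis is correct.

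There is a genuine gap at $n = 4$. You assert that the missing cocycle for $\Delta^{\{1,2,3,4\}}$, namely $\alpha_{123}\circ\alpha_{134} = \alpha_{234}\circ\alpha_{124}$, ``follows from the four cocycles already in force upstairs by the pentagonal coherence of $2$-categories.'' This is false. The four known cocycles all live in mapping groupoids $\Mor(X_0, -)$, while the missing one lives in $\Mor(X_1, X_4)$. Combining the four, what you actually obtain is only the whiskered identity
\[(\alpha_{123}\circ\alpha_{134})\ast\phi_{01} = (\alpha_{234}\circ\alpha_{124})\ast\phi_{01}\]
in $\Mor(X_0, X_4)$, and whiskering by $\phi_{01} = \phi$ is not injective for a general $2$-category. (This is precisely the distinction between the inner horns $\Lambda^4_k$ with $1 \leq k \leq 3$, whose missing cocycle already lives in $\Mor(X_0,X_4)$ and so fills automatically, and the outer horn $\Lambda^4_0$, which does not.) As a result you have also misattributed the last clause of the lemma: the uniqueness of $\zeta$ does \emph{not} come from ``existence and uniqueness of the $n=3$ lift''; the cocartesian condition only demands \emph{existence} of each outer-horn fill, and the $n=3$ fill is not unique precisely when $-\ast\phi$ fails to be injective. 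The correct accounting is: $n=2$ gives existence of $(\psi,\kappa)$; $n=3$ gives existence of $\zeta$; and $n=4$ — the case you dismissed — gives uniqueness of $\zeta$. Conceptually this matches the fact that the lemma is an assertion that a comparison groupoid is a contractible groupoid, which is one nonemptiness condition and two coherence conditions spread over dimensions $2$, $3$, and $4$.

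A smaller remark on your $n=3$ normalization: rather than claiming you can force $X_2 = X_3$ and $\phi_{23} = \id$ strictly, the cleaner move is to use Lemma~\ref{lem:21functorisinnerfib} to lift the downstairs $2$-isomorphism $\alpha^{\DD}_{123}\colon f(\phi_{13}) \to f(\phi_{23}\phi_{12})$ starting from $\phi_{13}$, obtaining a $2$-isomorphism $\nu$ in $\CC$; composing $\kappa'$ with $(\nu\ast\phi)^{-1}$ then produces a second fill of the same $n=2$ problem covering the same $(\alpha,\lambda)$, as the downstairs cocycle guarantees. This places you squarely in the setting of the lemma's uniqueness clause without any strict normalization.
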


It now follows easily from Lms. \ref{lem:21functorisinnerfib} and \ref{lem:21functoriscocartesian} that the projection
\[p\colon \OO_{\cop_N}^{\para}\to B\NN_{N_d}\]
is a cocartesian fibration, and it's immediate from the construction that it's the one that is classified by $I$.

\begin{dfn} We define the \emph{relative presheaf category of $\OO_{\cop_N}^{\para}$ over $B\NN_{N_d}$} as the simplicial set $\PP_{\cop_N}^{\para}$ over $(B\NN_{N_d})^\op$ determined by the existence of bijections
\[\Mor_{(B\NN_{N_d})^\op}(K, \PP_{\cop_N}^{\para}) \cong \Mor(K \X_{(B\NN_{N_d})^\op} (\OO_{\cop_N}^{\para})^\op, \Top)\]
natural in $K \in s\Set_{(/B\NN_{N_d})^\op}$. We regard $\PP_{\cop_N}^{\para}$ as a simplicial set over $B \NN$ by applying the canonical isomorphism
\[(B\NN_{N_d})^\op \cong B\NN_{N_d}.\]
\end{dfn}

Now by \cite[Pr. 3.2.2.13]{HTT}, we see that the structure map
\[\PP_{\cop_N}^{\para} \to B\NN_{N_d}\]
is a cocartesian fibration whose fiber over the unique object of $B\NN_{N_d}^{\X}$ is the presheaf $\iy$-category $\Fun(\OO_{\cop_N}^{\op},\Top)$. For any edge $n$ of $B\NN_{N_d}$, the pushforward functor is equivalent to the functor $\mu_n^{\star}$.

\begin{dfn} Let $\FF_{\cop_N}^{\para}$ be the full subcategory of $\PP_{\cop_N}^{\para}$ spanned by the objects of $\FF_{\cop_N}$. Since each $\mu_n^{\star}$ carries respresentables to coproducts of representables, the projection
\[\pi\colon \FF_{\cop_N}^{\para}\to B\NN_{N_d}\]
is again a cocartesian fibration, and the cocartesian pushforward over $n \in B\NN_{N_d}$ can be identified with $i_n$.
\end{dfn}

Now we need to use the functoriality of the assignment $\goesto{C}{A^{\eff}(C)}$; since we aim to construct our $\infty$-categories vertically (i.e., using fibration of various kinds), we shall describe this functor vertically as well.

\begin{dfn} Let $\twa(\D^n)$ be the twisted arrow category of the $n$-simplex \cite[\S 2]{mack1}; its vertices are labeled by pairs of integers $(i, j)$ with $0 \leq i \leq j \leq n$. Let
\[D^n\subseteq\twa(\D^n)^{\op} \X \D^n\]
be the full subcategory spanned by those triples $((i, j), h)$ for which $0 \leq i \leq j \leq h \leq n$, and let $\rho\colon D^n \to \D^n$ be the projection onto the last factor. Suppose $q\colon E \to B$ is a cocartesian fibration of $\iy$-categories. We define the \emph{effective Burnside category of $E$ relative to $B$}, $A^{\eff}_B(E)$, as the simplicial set whose $n$-simplices over a fixed $n$-simplex $\sigma\colon\D^n \to B$ are maps $z\colon D^n \to E$ such that
\begin{itemize}
\item the diagram
\[\begin{tikzcd}
D^n \ar{r}{z} \ar{d}{\rho} & E \ar{d}{q} \\
\D^n \ar{r}{\sigma} & B \end{tikzcd}\]
commutes;
\item for any $((i, j), h)$ with $h \leq n - 1$, the image under $z$ of the edge 
\[((i, j), h) \to ((i, j), h + 1)\]
 is $q$-cocartesian;
\item for any $i_0, i_1, j_0, j_1, h$ with
\[0 \leq i_0 \leq i_1 \leq j_1 \leq j_0 \leq h,\]
the image under $z$ of the square spanned by the four vertices
\[((i_0, j_0), h), ((i_0, j_1), h), ((i_1, j_0), h), ((i_1, j_1), h),\]
which necessarily lies within a fiber of $q$, is a pullback square.
\end{itemize}
The following is proved in \cite[Appendix C]{BDGNS}:
\end{dfn}

\begin{prp} If the fibers of $E$ admit pullbacks, and those pullbacks are preserved by the cocartesian pushforwards, then $A^{\eff}_B(E)$ is a cocartesian fibration whose cocartesian edges are given by those maps $z : D^1 \to E$ which take the edges
\[((0, 1), 1) \to ((0, 0), 1), ((0, 1), 1) \to ((1, 1), 1)\]
to equivalences.
\end{prp}

\begin{nul}\label{nul:unpackAeffrel} The relative Burnside category of interest to us will be $A^{\eff}_{B\NN_{N_d}}(\FF_{\cop_N}^{\para})$. It is a cocartesian fibration whose fiber over the unique vertex of $B\NN_{N_d}$ is $A^{\eff}(\FF_{\cop_N})$ and whose cocartesian pushforward over $n$ is equivalent to $A^{\eff}(j_n)$.

More explicitly, $A^{\eff}_{B\NN_{N_d}}(\FF_{\cop_N}^{\para})$ can be identified with the following $2$-category:
\begin{itemize}
\item An object is a finite cyclonic set of degree $N$, hence a disjoint union of cyclonic orbits.
\item For any finite cyclonic sets $X$ and $Y$ of degree $N$, a $1$-morphism $\fromto{X}{Y}$ is a pair $(n,f)$ consisting of an element $n\in\NN_{N_d}$ and a span $f$ from $X$ to $\mu_nY$ of the form
\begin{equation*}
\begin{tikzpicture}[baseline]
\matrix(m)[matrix of math nodes, 
row sep={7ex,between origins}, column sep={7ex,between origins}, 
text height=1.5ex, text depth=0.25ex] 
{&\mu_nU&\\ 
X&&\mu_nY;\\}; 
\path[>=stealth,->,font=\scriptsize,inner sep=0.5pt] 
(m-1-2) edge node[above left]{$f_0$} (m-2-1) 
edge node[above right]{$\mu_n(f_1)$} (m-2-3); 
\end{tikzpicture}
\end{equation*}
\item A $2$-isomorphism $\equivto{(n,f)}{(n',f')}$ between two $1$-morphisms
\[\fromto{X}{Y}\]
can only exist if $k=k'$, in which case it is a diagram
\begin{equation*}
\begin{tikzpicture}[baseline]
\matrix(m)[matrix of math nodes, 
row sep={7ex,between origins}, column sep={7ex,between origins}, 
text height=1.5ex, text depth=0.25ex] 
{&&[-3ex]\mu_nU&[-3ex]&\\ 
X&s&&t&\mu_nY\\
&&\mu_nU'&&\\}; 
\path[>=stealth,->,font=\scriptsize,inner sep=0.75pt] 
(m-1-3) edge node[above left]{$f_0$} (m-2-1)
edge node[right]{$\phi$} (m-3-3)
edge node[above right]{$\mu_n(f_1)$} (m-2-5) 
(m-3-3) edge node[below left]{$f'_0$} (m-2-1) 
edge node[below right]{$\mu_n(f'_1)$} (m-2-5); 
\end{tikzpicture}
\end{equation*}
in which $\phi$ is an isomorphism, and $s$ and $t$ are intertwiners.
\end{itemize}
\end{nul}

\begin{dfn} Suppose $\AA$ a presentable, additive $\infty$-category. Let $\AA^{\para\prime}_{\cop_N}$ be the unique simplicial set over $B\NN_{N_d}$ giving natural bijections
\[\Mor_{B\NN_{N_d}}(K, \AA^{\para\prime}_{\cop_N}) \cong \Mor(K \X_{B\NN_{N_d}} A^{\eff}_{B\NN_{N_d}}(\FF_{\cop_N}^{\para}), \AA)\]
for $K \in s\Set_{/B\NN_{N_d}}$. By the opposite of \cite[Pr. 3.2.2.13]{HTT}, the projection 
\[\rho \colon \AA^{\para\prime}_{\cop_N} \to B\NN_{N_d}\]
is a cartesian fibration. A vertex of $\AA^{\para\prime}_{\cop_N}$ is a functor $A^{\eff}(\FF_{\cop_N})\to\AA$, and the cartesian pullback over the edge $n$ of $B\NN_{N_d}$ may be identified with the precomposition with
\[A^{\eff}(p_n)\colon A^{\eff}(\FF_{\cop_N}) \to A^{\eff}(\FF_{\cop_N}).\]
Let $\AA^{\para}_{\cop_N}$ be the full subcategory of $\AA^{\para\prime}_{\cop_N}$ spanned by the cyclonic spectra.
\end{dfn}

\begin{nul} Since all of the $A^{\eff}(p_n)$ are additive functors, the cartesian pullbacks preserve this full subcategory, and so the projection
\[ \pi\colon\AA^{\para}_{\cop_N} \to B\NN_{N_d}\]
is a cartesian fibration. Moreover, each of the cartesian pullbacks admits a left adjoint given by the left Kan extension $i_n^{\star}$ along $A^{\eff}(p_n)$, so $\pi$ is also cocartesian.
\end{nul}

\begin{nul} Consequently, for an $N$-cyclonic object $X\colon\fromto{A^{\eff}(\FF_{\cop_N})}{\AA}$, we can express the values of $i_n^{\star}X$ explicitly as a certain (admittedly rather complicated) colimit:
\[i_n^{\star}X\angs{m}_N\simeq\underset{W\in A(n,\angs{m}_N)}{\colim}\ X(W)\]
is an equivalence, where
\[A(n,\angs{m}_N)\subset A^{\eff}_{B\NN_{N_d}}(\FF_{\cop_N}^{\para})_{/\angs{m}_N}\]
is the subcategory whose objects are morphisms $\fromto{W}{\angs{m}_N}$ that cover the edge $n\in(B\NN_{N_d})_1$ and whose morphisms are morphisms $\fromto{W'}{W}$ over $\angs{m}_N$ whose image in $(B\NN_{N_d})_1$ is degenerate.
\end{nul}

\begin{dfn} Suppose $\AA$ a presentable, additive $\infty$-category, and suppose $N\in\NNhat$. Then the $\infty$-category of \emph{$N$-cyclotomic objects of $\AA$} is the $\infty$-category
\[\AA_{\Phi_N}\coloneq\Map^{\flat}_{s\Set^+_{/B\NN_{N_d}}}((B\NN_{N_d})^{\sharp},(\AA^{\para}_{\cop_N})^{\natural})\]
of cocartesian sections of the cocartesian fibration $\pi$. Similarly, the $\infty$-category of \emph{$N$-pre\-cyc\-lo\-tomic objects of $\AA$} is the $\infty$-category
\[\AA_{\Phi'_N}\coloneq\Map_{s\Set_{/B\NN_{N_d}}}(B\NN_{N_d},\AA^{\para}_{\cop_N})\]
of \emph{all} sections of $\pi$.

More conceptually, $\AA_{\Phi_N}$ is the limit of the diagram 
\[B\NN_{N_d} \to \Cat_\iy\]
classifying $\pi$, and $\AA_{\Phi'_N}$ is the lax limit of this diagram.

When $N=\infty$, we shall drop the $N$, write $\AA_{\Phi}$, and refer just to \emph{cyclotomic} and \emph{precyclotomic objects}.
\end{dfn}

\begin{nul} We unwind the definitions to find that $\AA_{\Phi_N}$ is the full subcategory of the $\infty$-category $\Fun(A^{\eff}_{B\NN_{N_d}}(\FF_{\cop_N}^{\para}),\AA)$ spanned by those functors
\[X\colon\fromto{A^{\eff}_{B\NN_{N_d}}(\FF_{\cop_N}^{\para})}{\AA}\]
such that:
\begin{itemize}
\item the restriction of $X$ to $A^{\eff}(\FF_{\cop_N})\subset A^{\eff}_{B\NN_{N_d}}(\FF_{\cop_N}^{\para})$ is a Mackey functor, and
\item for any orbit $\angs{m}_N$ and any $n\in\NN_{N_d}$, the natural map
\[\fromto{i_n^{\star}X\angs{m}_N\simeq\underset{W\in A(n,\angs{m}_N)}{\colim}\ X(W)}{X\angs{m}_N}\]
is an equivalence, where $A(n,\angs{m}_N)$ is as above.
\end{itemize}
\end{nul}

\begin{nul} There are two ways to think of the universal property of $\AA_{\Phi_N}$, because it is the limit of presentable $\infty$-categories and left adjoints, but at the same time it is the colimit of presentable $\infty$-categories and right adjoints. So on one hand, the assertion is that for any presentable $\infty$-category $C$, any colimit-preserving functor $F\colon\fromto{C}{\AA_{\cop_N}}$, and any suitably compatible collection of natural equivalences $\equivto{i_n^{\star}\circ F}{F}$, one has an essentially unique factorization of $F$ through $\AA_{\Phi_N}$. On the other hand, for any presentable $\infty$-category $D$, any limit-preserving, accessible functor $G\colon\fromto{\AA_{\cop_N}}{D}$, and any suitably compatible collection of natural equivalences $\equivto{G}{G\circ A^{\eff}(i_n)^{\star}}$, one has an essentially unique factorization of $G$ through $\AA_{\Phi_N}$.
\end{nul}

\begin{thm} The $\infty$-category $\Sp_{\Phi}$ is equivalent to the underlying homotopy theory of the Blumberg--Mandell relative category of cyclotomic spectra, and the $\infty$-category $\Sp_{\Phi_{p^{\infty}}}$ is equivalent to the underlying homotopy theory of the Blumberg--Mandell relative category of $p$-cyclotomic spectra
\begin{proof} Suppose $N\in\NNhat$. Under the equivalence between $\Sp_{\cop_N}$ and $\categ{OrthoSp}_{T,N}$ of Th. \ref{thm:cyclonesareTspectra}, the functors $i_n^{\star}$ for $n\in\NN_{N_d}$ agree with the functors $\rho_n^{\star}\Phi^{C_n}$ of \cite{BM}. Therefore, when $N=\infty$ or $N=p^{\infty}$, it suffices to show that the ``model${}^{\ast}$ categories'' of \cite{BM} do indeed model the homotopy limit of this diagram of homotopy theories. This follows from the following lemma. 
\end{proof}
\end{thm}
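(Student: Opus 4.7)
The plan is to reduce the theorem to the assertion that the Blumberg--Mandell model${}^{\ast}$ category of cyclotomic spectra presents the $\infty$-categorical homotopy limit of a certain diagram of presentable $\infty$-categories indexed by $B\NN_{N_d}$.

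First, fix $N\in\{\infty,p^\infty\}$ and apply Theorem \ref{thm:cyclonesareTspectra} to identify $\Sp_{\cop_N}\simeq\categ{OrthoSp}_{T,N}$ in $\Fun(\NNhat^{\op},\Cat_\iy)$. By construction, $\Sp_{\Phi_N}$ is the limit of the diagram $B\NN_{N_d}\to\Cat_\iy$ classifying the cocartesian fibration $\pi\colon\Sp^{\para}_{\cop_N}\to B\NN_{N_d}$, whose transition functors are the endofunctors $i_n^\star$ for $n\in\NN_{N_d}$. The first substantive step is to match $i_n^\star$, under the equivalence of Theorem \ref{thm:cyclonesareTspectra}, with the composite $\rho_n^\star\Phi^{C_n}$ that Blumberg--Mandell use to encode cyclotomic structure. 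Since both functors preserve colimits, it suffices to check the identification on the compact generators $S^{\langle m\rangle}$; on the cyclonic side this is handled by Example \ref{exm:representableMack} and Barratt--Priddy--Quillen, and on the orthogonal side by the Segal--tom Dieck splitting, both yielding wedges indexed by $\Sigma^\infty_+(T/C_k)$ for appropriate $k$. Naturality in $n$ then upgrades the pointwise identification to an equivalence of cocartesian fibrations over $B\NN_{N_d}$.

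The remaining step, and the main obstacle, is precisely the lemma alluded to in the proof: that the relative category of cyclotomic spectra in the sense of Blumberg--Mandell models the homotopy limit of the diagram $n\mapsto\categ{OrthoSp}_{T,N}$ with transition functors $\rho_n^\star\Phi^{C_n}$. The subtlety is that Blumberg--Mandell's objects carry \emph{strict} structure maps $\rho_n^\star\Phi^{C_n}X\to X$, whereas the $\infty$-categorical limit requires only coherent compatibility. To bridge this gap, the plan is to exploit the fact that Blumberg--Mandell's geometric fixed point construction is realised as a genuine right Quillen endofunctor on a point-set model of $T$-spectra, and then to invoke a standard strictification principle: strict sections of the Grothendieck construction of a diagram of right-deformable functors between combinatorial model categories compute the $\infty$-categorical homotopy limit of the associated diagram, after one localises to invert the Blumberg--Mandell weak equivalences. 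The $p$-typical statement is proved identically, with $\NN_{N_d}$ replaced throughout by the submonoid of $\NN$ generated by $p$.
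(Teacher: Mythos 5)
Your outline has exactly the same architecture as the paper's proof: use Theorem \ref{thm:cyclonesareTspectra} to transport the $B\NN_{N_d}$-diagram from cyclonic spectra to orthogonal $T$-spectra, check that $i_n^{\star}$ matches $\rho_n^{\star}\Phi^{C_n}$ on compact generators, and then reduce to the assertion that Blumberg--Mandell's relative category of strict cyclotomic objects computes the $\infty$-categorical homotopy limit. You have also correctly identified the strictification question as the crux.

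Where you diverge is in how you propose to dispose of that crux. You appeal to a ``standard strictification principle'' for diagrams of right-deformable functors between combinatorial model categories. The paper instead formulates and uses a specific lemma: if $\FF\colon D\to\categ{RelCat}$ is a diagram of relative functors whose values $\FF_d$ are \emph{partial model categories} in the sense of Barwick--Kan, then the category of cocartesian sections (with objectwise weak equivalences) models the homotopy limit of $N\FF$. The proof of that lemma is an adaptation of Bergner's theorem on homotopy limits of diagrams of left Quillen functors, with the Barwick--Kan partial-model-category machinery substituting for the Quillen hypotheses. This distinction is not cosmetic: Blumberg--Mandell's category of cyclotomic spectra is only a ``model${}^{\ast}$ category'' (equivalently, a partial model category), not a combinatorial model category, which is precisely why a new lemma is required rather than an off-the-shelf result. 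Your ``right-deformable'' hypothesis is also questionable in direction -- the transition functors $\rho_n^{\star}\Phi^{C_n}$ are naturally constructed as (composites involving) left adjoints -- whereas the paper's hypothesis is simply that they are relative functors, which is both weaker and more obviously satisfied. So your plan identifies the right obstruction but overestimates how ``standard'' the tool is that removes it; the paper's actual contribution at this step is to supply that tool explicitly for partial model categories.
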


\begin{lem} Suppose $D$ a small category, and suppose $\FF\colon\fromto{D}{\categ{RelCat}}$ a diagram of a relative categories and relative functors (in the sense of \cite{relcat}). Suppose, additionally, that $\FF_d$ is a partial model category \cite[1.1]{partmodcat} for any object $d\in D$. Then the homotopy limit of the diagram $N\FF$ of underlying $\infty$-categories is equivalent to the relative nerve of the category of cocartesian sections of $\FF$,-- i.e., the category of those tuples $(X,\phi)=((X_d)_{d\in D},(\phi_f)_{f\in\mathrm{Arr}(D)})$ comprised of an object $X_d$ of $\FF_d$, one for each object $d\in D$, and a weak equivalence $\phi_f\colon\fromto{F(f)X_d}{X_e}$, one for each morphism $f\colon\fromto{d}{e}$ of $D$, such that for any composable pair of morphisms $f$ and $g$ of $D$, one has
\[\phi_{f\circ g}=\phi_g\circ F(g)\phi_f,\]
in which a weak equivalence is a morphism $\fromto{(X,\phi)}{(Y,\psi)}$ such that for any object $d\in D$, the morphism $\fromto{X_d}{Y_d}$ is a weak equivalence of $\FF_d$.
\begin{proof} If the functors $F(f)$ were all left Quillen functors between model categories, we would be done by Bergner's \cite[Th. 4.1]{bergnerholim}. But in fact, the proof there applies immediately in this context, in light of \cite[3.3]{partmodcat}.
\end{proof}
\end{lem}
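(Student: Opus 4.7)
The plan is to reduce the statement to Bergner's theorem on homotopy limits of model categories \cite[Th.~4.1]{bergnerholim} and then verify that her proof, which is stated for diagrams of model categories with left Quillen functors, in fact only uses a list of formal properties that are enjoyed by partial model categories. So the work is in auditing the argument, not in reinventing it.

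First I would recall the shape of Bergner's proof. She constructs a Reedy-fibrant replacement of the diagram $\FF$ in the projective model structure on $\categ{RelCat}^D$ (or on simplicial-categorical diagrams after passing to hammock localizations), and identifies the homotopy limit with the category of strictly commuting cocartesian sections of this replacement. The ingredients she invokes are: (i) each $\FF_d$ has a well-behaved hammock localization whose nerve models the underlying $\infty$-category; (ii) the relative functors $F(f)$ induce functors of underlying $\infty$-categories that agree with a fibrant-replacement-composed-with-pushforward; and (iii) cocartesian sections can be rectified to strict sections because appropriate factorizations exist.

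The key step is therefore to invoke \cite[3.3]{partmodcat}, which provides exactly the properties of a partial model category that make items (i)--(iii) go through: a partial model category $\FF_d$ possesses the three-arrow calculus needed to compute hammock localizations, and its underlying $\infty$-category can be modeled by the relative nerve as for an ordinary model category. Granting this, for each relative functor $F(f)$ one has a homotopically meaningful derived functor between the underlying $\infty$-categories, and one may perform Bergner's rectification of homotopy-coherent cocartesian sections to strict cocartesian sections using the factorizations guaranteed by the partial model structure. I would then verify, step by step following \cite{bergnerholim}, that every appeal to the model category axioms is in fact only an appeal to these three properties.

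The main obstacle I anticipate is purely bookkeeping: confirming that the rectification argument in Bergner's proof, which replaces homotopy-coherent diagrams by strictly commuting ones via functorial factorizations, can be carried out using only the partial factorizations available in a partial model category rather than full functorial factorizations. This is exactly the content of \cite[3.3]{partmodcat}, so the verification should amount to tracing through Bergner's construction and noting at each step that the factorizations invoked lie within the distinguished class provided by the partial model structure. Once this is checked, the equivalence between the homotopy limit of $N\FF$ and the relative nerve of the category of cocartesian sections follows by exactly the same argument as in the model-categorical case.
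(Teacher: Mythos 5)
Your proposal is correct and takes exactly the same approach as the paper: invoke Bergner's homotopy limit theorem for model categories and observe that her argument applies verbatim to partial model categories once \cite[3.3]{partmodcat} is in hand. The paper's proof is just a terser statement of the same two-step reduction.
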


\begin{prp} Suppose $\AA$ a presentable, additive $\infty$-category. Then $\AA_{\Phi_N}$ is presentable and additive. Moreover, if $\AA$ is stable, then so is $\AA_{\Phi_N}$.
\begin{proof} The first claim follows from \cite[Pr. 5.5.3.13]{HTT} and an easy argument. The second claim follows from \cite[Pr. 4.8.2.18]{HA} and \cite[Th. 3.4.3.1]{HA}.
\end{proof}
\end{prp}

\begin{nul} So the $\infty$-category $\Sp_{\Phi_N}$ of $N$-cyclotomic spectra is a presentable stable $\infty$-category. This may be surprising, in light of Blumberg and Mandell's \cite{BM}, where only a ``model${}^{\ast}$ structure'' of cyclotomic spectra is constructed; one might have therefore have feared that it would be impossible to form general limits and colimits in $\Sp_{\Phi_N}$. In fact, the presentability of $\Sp_{\Phi_N}$ implies that there is a combinatorial model category of cyclotomic spectra.
\end{nul}

\begin{exm} Suppose $\AA$ a Grothendieck abelian $1$-category. An $N$-cyclotomic object of $\AA$ factors through the homotopy category $hA^{\eff}_{B\NN_{N_d}}(\FF_{\cop_N}^{\para})$. Thus it is the data of an $N$-cyclonic object $X$ (as described in Ex. \ref{exm:cyclonic1cat}) along with some additional structure that we now describe. For simplicity, we'll suppose that $N = N_d$, so all divisors of $N$ occur with infinite multiplicity, and $\frac{1}{N}\ZZ$ is divisible.

Let $\mc{D}(\AA)$ be the derived category of $\AA$, which by \cite[Proposition 1.3.5.9]{HA} and \cite[Proposition 1.3.5.21]{HA} is a stable presentable $\iy$-category with $t$-structure. Denote by $\mc{X}$ the $N$-cyclonic object of $\mc{D}(\AA)$ obtained from $X$ by including $\AA$ into $\mc{D}(\AA)$ as the heart. By \ref{nul:normcofseq}, for any prime $p \in N_d$, we have a fiber sequence of $N$-cyclonic objects
\[j_{p, !} j_p^{\star} \mc{X}  \to \mc{X} \to i_{p,\star} i_p^{\star} \mc{X} .\]
Since all of the functors in this sequence are right exact on $\AA_{\cop_N}$, taking $\pi_0$ gives a right exact sequence
\[j_{p, !} j_p^{\star} X \os{\lambda_p} \to X \to i_{p, \star} i_p^{\star} X \to 0\]
in $\AA_{\cop_N}$.
Unwinding the definitions, we have
\[j_{p, !} j_p^{\star} X \angs{ap^v} \cong X \angs{a}\]
and 
\[\lambda_p\angs{ap^v} = \phi_{p^v | ap^v, \star} \colon X \angs{a} \to X \angs{ap^v},\]
where $a$ is coprime to $p$. Thus
\[i_{p, \star} i_p^{\star} X \angs{ap^v} \cong X \angs{a p ^v} / X \angs{a}.\]
Since $i_{p, \star}$ simply acts by reparametrization, the $N$-cyclonic object $\Phi^{C_p} X \coloneq i_p^{\star} X$ of $C_p$-geometric fixed points is given by 
\[\Phi^{C_p} X \angs{ap^v} \cong X \angs{ap^{v + 1}} / X \angs{a}.\]
For any pair of primes $p_1, p_2 \in N_d$, there's a natural isomorphism
\[\Phi^{C_{p_1}} \Phi^{C_{p_2}} X \cong \Phi^{C_{p_2}} \Phi^{C_{p_1}} X\]
which we'll make implicit by referring to each of these objects as $\Phi^{p_1 p_2} X$.
A cyclotomic structure on $X$ is, for each prime $p \in N$, a cyclonic isomorphism
\[r_p\colon\equivto{X}{\Phi^{C_p} X}\]
such that for each pair of primes $p_1, p_2 \in N_d$, the diagram
\[\begin{tikzcd}
X \ar{r}{r_{p_1}} \ar{d}[left]{r_{p_2}}& \Phi^{C_{p_1}} X \ar{d}{r_{p_2}} \\
\Phi^{C_{p_2}} X \ar{r}[below]{r_{p_1}} & \Phi^{p_1 p_2} X
\end{tikzcd}\]
commutes.

Unpacking slightly, the isomorphism $r_p$ amounts to giving, for each $n = ap^v \in N$, an isomorphism of abelian groups
\[r_{n | pn}\colon X \angs{n} \cong X\angs{pn} / X\angs{a}\]
compatibly with transfer and Frobenius maps.

A cyclotomic structure gives rise to a third type of relation between the objects of $\AA$ which constitute an object $X \in \AA_{\cop_N}$: \emph{restriction maps}
\[\rho_{m | n} \colon X \angs{n} \to X\angs{m}\]
whenever $u, v \in N$ with $m | n$. When $n = mp = a p^v$ for some prime $p$, $\rho_{m | n}$ is defined as the composition
\[X \angs{n} \to X \angs{n} / X\angs{a} \os{r_{ m | n}^{-1}} \to X \angs{m}.\]
By the compatibility square above, we may safely extend to all $m$ and $n$ using the prescription
\[\rho_{k | m} \rho_{m | n} = \rho_{k | n}.\]
Restriction maps commute with transfer and Frobenius maps.
\end{exm}

Since $\pi_0\colon \Sp \to \Ab$ is an additive functor, it extends to a functor
\[\Sp_{\cop_N} \to \Ab_{\cop_N},\]
which we'll also denote $\pi_0$. We now have:

\begin{lem} \label{lem:pi0cyclotomic} $\pi_0$ extends to a functor from the category $\Sp_{\Phi_N}^{\geq 0}$ of \emph{connective} $N$-cyclotomic spectra to the category $\Ab_{\Phi_N}$ of $N$-cyclotomic abelian groups.
\end{lem}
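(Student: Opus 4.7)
My plan is to interpret both sides as cocartesian sections of cocartesian fibrations over $B\NN_{N_d}$, namely $\Sp^{\para}_{\cop_N}$ and $\Ab^{\para}_{\cop_N}$, and to exhibit a map of cocartesian fibrations
\[\Sp^{\para,\geq 0}_{\cop_N} \to \Ab^{\para}_{\cop_N}\]
over $B\NN_{N_d}$ whose fiberwise restriction is the functor $\pi_0 \colon \Sp^{\geq 0}_{\cop_N} \to \Ab_{\cop_N}$ already at hand. Passing to cocartesian sections would then yield the desired extension.

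First I would verify that each cocartesian pushforward $i_n^{\star}$ restricts to an endofunctor of the full subcategory $\Sp^{\geq 0}_{\cop_N}$ of connective cyclonic spectra. This is immediate: $i_n^{\star}$ is defined as a left Kan extension along the additive functor $A^{\eff}(p_n)$, hence is computed pointwise by the colimit formula displayed in this section, and connective spectra are closed under colimits in $\Sp$. Consequently $\Sp^{\para,\geq 0}_{\cop_N}$ is a cocartesian subfibration of $\Sp^{\para}_{\cop_N}$.

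Second I would establish the Beck--Chevalley style equivalence $\pi_0 \circ i_n^{\star} \simeq i_n^{\star} \circ \pi_0$ of functors $\Sp^{\geq 0}_{\cop_N} \to \Ab_{\cop_N}$. Both versions of $i_n^{\star}$ are pointwise left Kan extensions along the same functor $A^{\eff}(p_n)$, and since $\pi_0 \colon \Sp^{\geq 0} \to \Ab$ is left adjoint to the inclusion $\Ab \hookrightarrow \Sp^{\geq 0}$, it preserves all colimits and therefore commutes with such Kan extensions. The square of functors is thus commutative up to canonical equivalence.

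Third, these fiberwise and Beck--Chevalley data assemble, via the naturality of straightening/unstraightening, into a map of cocartesian fibrations over $B\NN_{N_d}$ which preserves cocartesian edges, and taking cocartesian sections produces the sought-after functor $\Sp^{\geq 0}_{\Phi_N} \to \Ab_{\Phi_N}$. The only delicate step is this last one: promoting the fiberwise functor and the coherent intertwining equivalences to an honest morphism of cocartesian fibrations. This is essentially a bookkeeping exercise in $\infty$-categorical machinery once the first two steps are in place, and it is where the most care will be needed — but there is no genuine obstruction, since the two constructions are visibly modeled by the same left Kan extension formula applied to the additive target $\AA$.
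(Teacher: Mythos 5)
Your argument is correct, and it reaches the same pivotal fact as the paper -- the equivalence $\pi_0\Phi^{C_p}E \simeq \Phi^{C_p}\pi_0 E$ for connective $E$ -- but by a genuinely different route, and it is worth contrasting the two.

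The paper derives this commutation from the recollement cofiber sequence
\[j_{p,!}\,j_p^{\star}E \to E \to i_{p,\star}\Phi^{C_p}E,\]
which, after applying $\pi_0$, becomes a right-exact sequence in $\Ab_{\cop_N}$ by connectivity; comparison with the analogous sequence in $\Ab_{\cop_N}$ then identifies $\pi_0\Phi^{C_p}E$ with $\Phi^{C_p}\pi_0 E$. Your argument bypasses the recollement entirely: since $\pi_0\colon\Sp^{\geq 0}\to\Ab$ is a left adjoint, it preserves colimits, and since the geometric fixed point functors $i_n^{\star}$ on both $\Sp_{\cop_N}$ and $\Ab_{\cop_N}$ are computed by the same pointwise left Kan extension (colimit) formula along $A^{\eff}(p_n)$, the two operations commute on connective objects. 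The one elided hypothesis -- that the colimit appearing in the Kan extension formula for a connective cyclonic spectrum stays in $\Sp^{\geq 0}$ -- is exactly what your first step supplies.

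The paper's route is more concrete and yields the explicit short-exact-sequence description of $\pi_0\Phi^{C_p}$ that is then used in the surrounding examples; your route is more structural and, importantly, you are more explicit about the coherence problem than the paper is. The paper closes with the sentence ``since an $N$-cyclotomic structure on $E$ amounts to a coherently compatible system of equivalences \dots the result follows,'' which sweeps under the rug precisely the bookkeeping you flag as the delicate step. Your reformulation -- exhibit a map of cocartesian fibrations $\Sp^{\para,\geq 0}_{\cop_N}\to\Ab^{\para}_{\cop_N}$ over $B\NN_{N_d}$ preserving cocartesian edges, then pass to cocartesian sections -- is the honest $\infty$-categorical way to obtain that coherence, and it buys you the promotion to a functor $\Sp^{\geq 0}_{\Phi_N}\to\Ab_{\Phi_N}$ without a separate ``and now check compatibility for all pairs of primes'' argument. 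Both arguments are at a comparable level of rigor once one accepts that the coherence assembly can be carried out; yours makes the shape of that assembly explicit.
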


\begin{proof}
Let $E \in \Sp_{\cop_N}$ be an $N$-cyclonic spectrum. Then we have a cofiber sequence
\[j_{p, !} j_p^{\star} E \to E \to i_{p, \star} \Phi^{C_p} E.\]
By connectivity, taking $\pi_0$ gives rise to a short exact sequence
\[j_{p, !} j_p^{\star} (\pi_0 E) \os{\lambda_p} \to \pi_0 E \to i_{p,\star} (\pi_0 \Phi^{C_p} E) \to 0.\]
We deduce that
\[\Phi^{C_p} \pi_0 E \cong \pi_0 \Phi^{C_p} E\]
and since an $N$-cyclotomic structure on $E$ amounts to a coherently compatible system of equivalences $\Phi^{C_p} E \simeq E$ as $p$ ranges over the primes dividing $N$, the result follows.
\end{proof}

\begin{exm}
Let $A$ be a commutative ring. Since we have an isomorphism of cyclonic abelian groups
\[\WW_\bullet(A) \cong \pi_0 \THH(A).\]
[See \cite[Th. 3.3]{MR1410465} for the $p$-typical case.] Since $\THH(A)$ is always connective for an ordinary commutative ring $A$, Lemma \ref{lem:pi0cyclotomic} implies that $\WW_\bullet(A)$ is naturally a cyclotomic abelian group. Unwinding the definitions, this comes down to giving isomorphisms
\[\WW_{\angs{ap^v}}(A) \cong \WW_{\angs{ap^{v + 1}}}(A) / V_{a | ap^{v + 1}} \WW_{\angs{a}}(A).\]
for each $a, p$ and $v$. But this is immediate from inspection at the level of Witt components.
\end{exm}

\begin{nul} We can also contemplate the symmetric monoidal structures on $\Sp_{\Phi_N}$. For this, one may construct a symmetric monoidal extension of the cocartesian fibration $\pi$ above: if $\AA^{\otimes}$ is a presentable, symmetric monoidal, additive $\infty$-category, then the cocartesian fibration $\pi$ constructed above extends to a cocartesian fibration
\[\pi^{\otimes}\colon\fromto{(\AA^{\para}_{\cop})^{\otimes}}{B\NN_{N_d}\times N\Lambda(\FF)}\]
whose pullback along $\fromto{N\Lambda(\FF)}{B\NN_{N_d}\times N\Lambda(\FF)}$ is symmetric monoidal, which exhibits a factorization of the functor $\fromto{B\NN_{N_d}}{\Cat_{\infty}}$ that classifies $\pi$ through a functor
\[\fromto{B\NN_{N_d}}{\categ{CAlg}(\categ{Pr}_{\infty}^{L,\otimes})}.\]
We leave the details to the reader, for now; we will return to this at a later point.

If we form the limit of the functor $\fromto{B\NN_{N_d}}{\categ{CAlg}(\categ{Pr}_{\infty}^{L,\otimes})}$, then by \cite[Pr. 3.2.2.1]{HA}, we obtain a presentable symmetric monoidal $\infty$-category $\AA_{\Phi_N}^{\otimes}$.

In particular, when $\AA=\Sp$, we obtain a presentable symmetric monoidal stable $\infty$-category $\Sp_{\Phi_N}^{\otimes}$. This lifts the triangulated tensor category structure Cary Malkiewich found \cite[Cor. 1.5]{Malk} on the homotopy category $h\Sp_{\Phi_N}^{\otimes}$.
\end{nul}

%-------------------------------------------------------------------%

\section{Kaledin's conjecture} Kaledin seeks \cite[(0.1)]{MR3137194} a pullback square of ``noncommutative brave new schemes''
\begin{equation*}
\begin{tikzpicture}[baseline]
\matrix(m)[matrix of math nodes,
row sep=4ex, column sep=4ex,
text height=1.5ex, text depth=0.25ex]
{\Spec\DD(R)_{\Psi} & \Spec\Sp_{\Psi} \\
\Spec\DD(R) & \Spec\Sp \\ };
\path[>=stealth,->,font=\scriptsize]
(m-1-1) edge node[above]{} (m-1-2)
edge node[left]{} (m-2-1)
(m-1-2) edge node[right]{} (m-2-2)
(m-2-1) edge node[below]{} (m-2-2);
\end{tikzpicture}
\end{equation*}
for any commutative ring $R$ that induces the square \eqref{eqn:Kaledinsquare} under formation of quasicoherent modules. Let us make this precise.

\begin{dfn} The $\infty$-category of \emph{noncommutative affine schemes} $\categ{Aff}_{\SS}$ will be the opposite $\infty$-category $(\Pr^{L}_{\st})^{\op}$ to the $\infty$-category of stable, presentable $\infty$-categories. For any stable presentable $\infty$-category $\AA$, we write $\Spec\AA$ for the corresponding object of $\categ{Aff}_{\SS}$.

For any object $\Spec\AA\in\categ{Aff}_{\SS}$, let us write $\Mod^l(\Spec\AA)$ for the $\infty$-category $\Fun^L(\AA,\Sp)$; we shall call these objects \emph{left modules over $\AA$}.

Since $\Sp$ is the unit in $\Pr^{L}_{\st}$, we will write
\[\Spec\CC\simeq\Spec\AA\times\Spec\BB\]
if and only if $\CC\simeq\AA\otimes\BB$ in $\Pr^L_{\st}$.
\end{dfn}

\begin{exm} By \cite[Th. 4.8.4.1]{HA}, we may note that for any $E_1$-ring $A$, one has
\[\Mod^l(\Spec(\Mod^r(A)))\simeq\Mod^l(A)\]
\end{exm}

Accordingly, then, we will prove the following, which is a generalization of Kaledin's conjecture.
\begin{thm} Suppose $N\in\NNhat$. Then for any stable, presentable $\infty$-category $\AA$, there exists an $\infty$-category $\AA_{\Psi_N}$ such that
\[\Mod(\Spec\AA_{\Psi_N})\simeq\Mod(\Spec\AA)_{\Phi_N},\]
and moreover that
\[\Spec\AA_{\Psi_N}\simeq\Spec\AA\times\Spec\Sp_{\Psi_N}.\]
\begin{proof} In light of Pr. \ref{prp:cyclonicistensor} and our characterization of cyclotomic structures, we are now reduced to studying the action of $\NN_{N_d}$ on $\AA_{\cop_N}$ via the geometric fixed points. Of course one has
\[\AA_{\Phi_N}\simeq(\AA_{\cop_N})^{h\NN_{N_d}}\simeq\Fun^L(\Sp_{\cop_N},\AA)^{h\NN_{N_d}}\simeq\Fun^L((\Sp_{\cop_N})_{h\NN_{N_d}},\AA),\]
where the colimit is formed in $\Pr^L$ over the various functors $i_n^{\star}$. This is the same as the limit in $\Pr^R$ over the right adjoint functors $i_{n,\star}$, which by \cite[Th. 5.5.3.18]{HTT} is in turn the $\infty$-categorical limit.

\begin{dfn*} The $\infty$-category of \emph{$N$-cocyclotomic objects} of $\AA$ is the $\infty$-category
\[\AA_{\Psi_N}\coloneq((\AA^{\op})_{\Phi_N})^{\op}.\]
\end{dfn*}
\noindent With this, we thus obtain the desired equivalences
\[\AA_{\Phi_N}\simeq\Fun^L(\Sp_{\Psi_N},\AA)\text{\quad and\quad}\Fun^L(\AA,\Sp)_{\Phi_N}\simeq\Fun^L(\AA_{\Psi_N},\Sp).\]
Furthermore, since $\otimes$ preserves colimits in $\Pr^L_{\st}$ separately in each variable:
\begin{eqnarray*}
\AA_{\Psi_N}&\simeq&(\AA_{\cop_N})_{h\NN_{N_d}}\\
&\simeq&((\AA\otimes\Sp)_{\cop_N})_{h\NN_{N_d}}\\
&\simeq&(\AA\otimes\Sp_{\cop_N})_{h\NN_{N_d}}\\
&\simeq&\AA\otimes(\Sp_{\cop_N})_{h\NN_{N_d}}\\
&\simeq&\AA\otimes\Sp_{\Psi_N}.
\end{eqnarray*}
This completes the proof of the Theorem.
\end{proof}
\end{thm}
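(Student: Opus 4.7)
The plan is to reduce the theorem to formal manipulations inside $\Pr^L_{\st}$. By Pr.~\ref{prp:cyclonicistensor}, $\AA_{\cop_N} \simeq \AA \otimes \Sp_{\cop_N} \simeq \Fun^L(\Sp_{\cop_N}, \AA)$ for any presentable stable $\AA$, and $\AA_{\Phi_N}$ unwinds as the homotopy fixed points $(\AA_{\cop_N})^{h\NN_{N_d}}$ along the geometric fixed point functors $i_n^{\star}$. Accordingly, I would define
\[\AA_{\Psi_N} \coloneq (\AA_{\cop_N})_{h\NN_{N_d}},\]
the homotopy colimit in $\Pr^L_{\st}$ along the left adjoints $i_n^{\star}$. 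Via the equivalence between $\Pr^L$ and $\Pr^R$, this coincides with the limit in $\Pr^R$ along the right adjoints $i_{n,\star}$, which by \cite[Th.~5.5.3.18]{HTT} is the underlying $\infty$-categorical limit, so $\AA_{\Psi_N}$ is automatically stable and presentable.

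The next step is to establish the universal property $\Fun^L(\Sp_{\Psi_N}, \BB) \simeq \BB_{\Phi_N}$ for every presentable stable $\BB$, via the chain
\[\BB_{\Phi_N} \simeq (\BB_{\cop_N})^{h\NN_{N_d}} \simeq \Fun^L(\Sp_{\cop_N}, \BB)^{h\NN_{N_d}} \simeq \Fun^L\!\bigl((\Sp_{\cop_N})_{h\NN_{N_d}}, \BB\bigr) \simeq \Fun^L(\Sp_{\Psi_N}, \BB).\]
Then, since $\otimes$ preserves colimits in $\Pr^L_{\st}$ separately in each variable,
\[\AA_{\Psi_N} \simeq (\AA \otimes \Sp_{\cop_N})_{h\NN_{N_d}} \simeq \AA \otimes (\Sp_{\cop_N})_{h\NN_{N_d}} \simeq \AA \otimes \Sp_{\Psi_N},\]
which gives the second conclusion $\Spec\AA_{\Psi_N} \simeq \Spec\AA \times \Spec\Sp_{\Psi_N}$. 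The first conclusion then falls out by tensor-hom adjunction:
\[\Mod(\Spec\AA_{\Psi_N}) = \Fun^L(\AA \otimes \Sp_{\Psi_N}, \Sp) \simeq \Fun^L(\Sp_{\Psi_N}, \Fun^L(\AA, \Sp)) \simeq \Fun^L(\AA, \Sp)_{\Phi_N} = \Mod(\Spec\AA)_{\Phi_N}.\]

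The one step requiring genuine care, and the main obstacle, is the second equivalence in the tensor product chain above: one must verify that under Pr.~\ref{prp:cyclonicistensor} the $\NN_{N_d}$-action on $\AA_{\cop_N}$ by geometric fixed points is genuinely the tensor of the trivial action on $\AA$ with the canonical action on $\Sp_{\cop_N}$. Concretely, this amounts to checking that the cocartesian fibration $\AA^{\para}_{\cop_N} \to B\NN_{N_d}$ arises from $\Sp^{\para}_{\cop_N} \to B\NN_{N_d}$ by fiberwise tensoring with $\AA$ over $\Sp$. This should fall out of the functoriality of the construction in $\AA$ together with the universal property of $\otimes$ in $\Pr^L_{\st}$, but it is the only step worth unpacking in detail.
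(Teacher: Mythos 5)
Your proposal is correct and follows essentially the same route as the paper: you define $\AA_{\Psi_N}$ as the colimit $(\AA_{\cop_N})_{h\NN_{N_d}}$ in $\Pr^L_{\st}$ (the paper phrases this as $((\AA^{\op})_{\Phi_N})^{\op}$ but immediately identifies it with the same colimit), establish $\BB_{\Phi_N}\simeq\Fun^L(\Sp_{\Psi_N},\BB)$ by the same chain of equivalences, and deduce $\AA_{\Psi_N}\simeq\AA\otimes\Sp_{\Psi_N}$ from $\otimes$ commuting with colimits in each variable. The step you rightly flag as requiring care — that the geometric-fixed-point action on $\AA_{\cop_N}$ is the tensor of the trivial action on $\AA$ with the action on $\Sp_{\cop_N}$ — is likewise elided in the paper's one-line ``$(\AA\otimes\Sp_{\cop_N})_{h\NN_{N_d}}\simeq\AA\otimes(\Sp_{\cop_N})_{h\NN_{N_d}}$,'' so your attention to it is a genuine improvement in transparency, not a divergence of method.
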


%-------------------------------------------------------------------%
%-------------------------------------------------------------------%
%-------------------------------------------------------------------%

\bibliographystyle{plain}
\bibliography{cyclonic}

\end{document}